\theoremstyle{plain}
\newtheorem{theorem}{Theorem}[section]
\newtheorem{lemma}[theorem]{Lemma}
\newtheorem{corollary}[theorem]{Corollary}
\newtheorem{proposition}[theorem]{Proposition}
\theoremstyle{definition}
\newtheorem{definition}{Definition}[section]
\theoremstyle{remark}
\newtheorem*{remark}{Remark}
\newcommand{\WR}{\mathbin{\mathrm{wr}}}
\newcommand{\bbz}{\mathbb{Z}}
\newcommand{\Aut}{\mathop{\mathrm{Aut}}}
\newcommand{\Id}{\mathop{\mathrm{Id}}}
\begin{document}

	\title[TBFT and $R_\infty$-Property for Lamplighter-Type Groups]{Twisted Burnside--Frobenius Theorem and $R_\infty$-Property for Lamplighter-Type Groups}

	\author{Mikhail I.~Fraiman}

\begin{abstract}
	We prove that the restricted wreath product ${\bbz_n \WR \bbz^k}$ has the $R_\infty$-property,
	i.~e.\ every its automorphism~$\varphi$ has infinite Reidemeister number~$R(\varphi)$,
	in exactly two cases: (1)~for any $k$ and even $n$; (2)~for odd $k$ and $n$ divisible by~3.
	
	In the remaining cases there are automorphisms with finite Reidemeister number, for which we prove the finite-dimensional twisted Burnside--Frobenius theorem (TBFT\textsubscript{$f$}):
	$R(\varphi)$ is equal to the number of equivalence classes of finite-dimensional irreducible unitary representations fixed by the action ${[\rho]\mapsto[\rho\circ\varphi]}$.

	\textbf{Keywords:} Reidemeister number, twisted conjugacy class, Burnside--Frobenius theorem, wreath product.
\end{abstract}

\maketitle

	\section{Introduction}
		
		Let $\varphi$ be an automorphism of a group $G$.
		Two elements ${g_1,g_2\in G}$ are said to be \emph{$\varphi$-twisted conjugate} if there exists
			${h\in G}$, such that ${g_1=hg_2\varphi(h^{-1})}$.
		It is well-know that this relation is an equivalence relation.
		The $\varphi$-class of an element~$g$ is denoted by~${\{g\}_\varphi}$ and is called the \emph{Reidemeister class} of the  element~$g$.
		The number of such classes is called the \emph{Reidemeister number of automorphism~$\varphi$} and is denoted by~$R(\varphi)$.
		A group is said to have \emph{$R_\infty$-property} if every its automorphism has infinite Reidemeister number.
		
		The study of Reidemeister classes is motivated by the study of Nielsen classes, which play an important role in topological fixed point theory.
		For a finite cell complex $X$ and a continuous map ${f \colon X \to X}$ the number $N(f)$ of $f$-Nielsen classes is a homotopy invariant, but is difficult for calculation.
		In some cases, however, this number coincides with the Reidemeister number ${R(\varphi)}$, where ${\varphi = f_\#\in\Aut\pi_1(X)}$.
		This transforms the topological problem into an algebraic one, namely, studying of the structure of Reidemeister classes.
		A study of algorithm of determining whether two points have the same Reidemeister class was conducted in~\cite{RCFG} for finitely generated free groups.
		
		Another important problem is the study of $R_\infty$-property for groups.
		Recent results in this field can be found, in particular, in~\cite{RCSWBG,RCNG,RCUE,RCEGG,RCFGFNFS,RCAR}.
		In~\cite{TR} it was shown that ${\bbz_p\WR\bbz^k}$ has the $R_\infty$-property for prime $p$ in exactly two cases: ${p=2}$ and ${p=3, k=2d+1}$ for some natural $d$.
		
		\textbf{Paper Structure.} Section~\ref{MF:secPRE} serves as a brief overview of some general facts related to Reidemeister classes.
		Also, the form of automorphisms of a wreath product is discussed.
		Then, we introduce three main results of the paper:
			in Section~\ref{MF:secPT} we prove Theorem~\ref{MF:theor1}, which implies $R_\infty$-property for the groups ${\bbz_{2n} \WR \bbz^k}$ and ${\bbz_{3n}\WR\bbz^{2k+1}}$;
			in Section~\ref{MF:secRI} we prove Theorem~\ref{MF:theor2}, which has two applications --- combined with Propositions~\ref{MF:prop3} and~\ref{MF:prop2d} it implies lack of $R_\infty$ for all the remaining groups among ${\bbz_n \WR \bbz^k}$, and combined with Proposition~\ref{MF:tbftfor} implies TBFT\textsubscript{$f$} for automorphisms with finite Reidemeister number (Theorem~\ref{MF:theortbft}).

		\textbf{Acknowledgements.} The author is thankful to E.~V.~Troitsky for suggesting the problem and for useful advices related to the research.
		
		The work was supported by the Foundation for the Advancement of Theoretical Physics and Mathematics ‘BASIS’ under research grant 19-7-1-34-2.

	\section{Preliminaries}\label{MF:secPRE}
	
	\subsection{General theory.} We begin with an overview of general facts, related to the Reidemeister classes and wreath products.
		
		\begin{proposition}[see~\cite{RCB}]
			For an abelian group the $\varphi$-twisted conjugacy class of the unit element is a subgroup and other classes are the corresponding cosets.
		\end{proposition}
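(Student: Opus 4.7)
The plan is to work in additive notation since $G$ is abelian. Consider the map $\tau_\varphi\colon G\to G$ defined by $\tau_\varphi(h)=h-\varphi(h)$ (equivalently $h\varphi(h^{-1})$ multiplicatively). The first step is to observe that $\tau_\varphi$ is a group homomorphism: because addition commutes, one computes
\[
\tau_\varphi(h_1+h_2)=(h_1+h_2)-\varphi(h_1+h_2)=\bigl(h_1-\varphi(h_1)\bigr)+\bigl(h_2-\varphi(h_2)\bigr)=\tau_\varphi(h_1)+\tau_\varphi(h_2).
\]
Therefore its image $H:=\{\tau_\varphi(h):h\in G\}$ is a subgroup of $G$, and by definition this image is precisely the $\varphi$-twisted conjugacy class $\{0\}_\varphi$ of the identity.

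Next I would show that every other Reidemeister class is a coset of $H$. For an arbitrary $g\in G$, the class $\{g\}_\varphi$ consists of elements of the form $h+g-\varphi(h)=g+\tau_\varphi(h)$ for $h\in G$, so $\{g\}_\varphi=g+H$. This simultaneously proves that distinct classes are translates of $H$ and recovers the known fact that two elements are $\varphi$-twisted conjugate if and only if their difference lies in $H$.

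The argument is essentially a one-line computation; there is no real obstacle. The only subtlety worth flagging is the essential use of commutativity in showing that $\tau_\varphi$ is a homomorphism — without it one only gets a set-theoretic map whose image need not be a subgroup, which is why the statement is specific to the abelian case.
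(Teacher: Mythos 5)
Your argument is correct and is exactly the intended one: the paper gives no proof (citing \cite{RCB}), but the remark immediately following the proposition identifies the class of the unit element with $\mathrm{Im}(1-\varphi)$, which is precisely your map $\tau_\varphi$. Your observation that commutativity is what makes $h\mapsto h-\varphi(h)$ a homomorphism, and hence its image a subgroup with the other classes as its cosets, is the whole content of the statement.
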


		\begin{remark}
			In case of abelian group the $\varphi$-class of the unit element coincides with ${\mathrm{Im}(1-\varphi)}$ and the index of this subgroup equals $R(\varphi)$.
		\end{remark}

		\begin{proposition}[see~\cite{TR}]
			Let $\tau_g$ be the inner automorphism of a group~$G$, i.~e. ${ \tau_g(h) = ghg^{-1} }$.
			Then the right shift action by the element~$g$ maps Reidemeister classes of~$\varphi$ onto Reidemeister classes of~${\tau_{g^{-1}}}\circ \varphi$.
			In particular,~${R(\varphi) = R(\tau_g\circ\varphi)}$.
		\end{proposition}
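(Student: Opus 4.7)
The plan is to verify directly that the map $x \mapsto xg$ sends the $\varphi$-class of any element $g_2$ onto the $(\tau_{g^{-1}}\circ\varphi)$-class of $g_2 g$, and then to observe that the inverse map $x\mapsto xg^{-1}$ provides a two-sided inverse, giving a bijection between the two sets of Reidemeister classes.

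For the key identity, I would start from the twisted conjugacy relation ${g_1 = h g_2 \varphi(h^{-1})}$, multiply both sides on the right by $g$, and insert ${g g^{-1}}$ strategically:
\begin{equation*}
    g_1 g = h g_2 \varphi(h^{-1}) g = h (g_2 g) \bigl(g^{-1} \varphi(h^{-1}) g\bigr) = h (g_2 g) (\tau_{g^{-1}} \circ \varphi)(h^{-1}).
\end{equation*}
This shows ${g_1 g}$ is $(\tau_{g^{-1}}\circ\varphi)$-twisted conjugate to ${g_2 g}$ via the same element $h$. Reading the computation backwards, starting from a twisted conjugacy with respect to ${\tau_{g^{-1}}\circ\varphi}$ and multiplying on the right by ${g^{-1}}$, recovers twisted conjugacy with respect to $\varphi$. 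Hence the right shift by $g$ is a well-defined bijection on equivalence classes.

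For the numerical consequence, the bijection just established yields ${R(\varphi) = R(\tau_{g^{-1}}\circ\varphi)}$. Since $g$ was arbitrary, replacing $g$ by $g^{-1}$ (equivalently, applying the already proven statement to the automorphism $\tau_g\circ\varphi$ and the element $g^{-1}$) gives ${R(\varphi) = R(\tau_g\circ\varphi)}$ as stated.

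There is no real obstacle here: the argument is a one-line manipulation of the defining relation for twisted conjugacy, plus the observation that the shift map is invertible. The only mild subtlety to keep straight is the placement of $g$ versus $g^{-1}$ in the inner automorphism, which is why I would carry out the insertion of $gg^{-1}$ explicitly rather than by inspection.
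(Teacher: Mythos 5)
Your proof is correct. The paper itself gives no proof of this proposition --- it is simply quoted from the cited reference \cite{TR} --- but your argument is the standard one: the identity $g_1 g = h\,(g_2 g)\,(\tau_{g^{-1}}\circ\varphi)(h^{-1})$ is verified correctly, the inverse shift $x\mapsto xg^{-1}$ gives the reverse implication, and the final substitution of $g^{-1}$ for $g$ correctly converts $R(\varphi)=R(\tau_{g^{-1}}\circ\varphi)$ into the stated $R(\varphi)=R(\tau_g\circ\varphi)$.
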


		The following proposition can be found in~\cite{TR} and will be very useful in the future:

		\begin{proposition}\label{MF:prop23}
			Let~$G$ be a group and $\varphi$ be its automorphism.
			Let $H$ be a $\varphi$-invariant normal subgroup.
			Denote by $C(\varphi)$ the fixed-point subgroup, by~$\varphi'$ the restriction of~$\varphi$ on~$H$, by $\overline{\varphi}$ the quotient automorphism of $G/H$.
			The following statements hold true:
			\begin{enumerate}
				
				\item\label{MF:prop231} projection $p\colon G \to G/H$ induces epimorphism of each Reidemeister class of $\varphi$
					onto some Reidemeister class of $\overline{\varphi}$, in particular,~${R(\overline{\varphi})\leq R(\varphi)}$;
					
				\item $R(\varphi')\leq R(\varphi)\cdot |C(\overline{\varphi})|$;
					\textit{(see~\cite{TBFTDG}).}
				
				\item if $G/H$ is a finitely generated residually finite group, then~${R(\varphi) < \infty}$ holds if and only if ${R(\overline{\varphi})<\infty}$ and  ${ R(\tau_g\circ\varphi') < \infty }$ for all ${g\in G}$ \textit{(see~\cite{TIR})}.
				
			\end{enumerate}
		\end{proposition}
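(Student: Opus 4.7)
\emph{Plan.} I would handle the three parts in order, exploiting the natural compatibility between $\varphi$-twisted conjugation on $G$, $\varphi'$-twisted conjugation on $H$, and $\overline{\varphi}$-twisted conjugation on $G/H$. Part~(1) is essentially by inspection: applying the quotient map $p$ to the defining relation $g_1 = h g_2 \varphi(h^{-1})$ yields $\overline{g_1} = \overline{h} \cdot \overline{g_2} \cdot \overline{\varphi}(\overline{h}^{-1})$, so $p$ sends each $\varphi$-class into a single $\overline{\varphi}$-class. Surjectivity onto the full target class follows by lifting: given $\overline{x} = \overline{h} \cdot \overline{g} \cdot \overline{\varphi}(\overline{h}^{-1})$ in $G/H$, choose any lift $h \in G$ and observe that $h g \varphi(h^{-1}) \in \{g\}_\varphi$ projects onto $\overline{x}$.

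For part~(2) I would fix a $\varphi$-class $\{g\}_\varphi$ with representative $g \in H$ and parametrize $\{g\}_\varphi \cap H$. Projecting the relation $g' = x g \varphi(x^{-1})$ with $g, g' \in H$ forces $\overline{x} = \overline{\varphi}(\overline{x})$, i.e.\ $\overline{x} \in C(\overline{\varphi})$; conversely any such $\overline{x}$ admits a lift $x \in G$ with $x g \varphi(x^{-1}) \in H$. A short direct check shows that two lifts of the same $\overline{x}$ produce $\varphi'$-conjugate elements of $H$. This yields a surjection from $C(\overline{\varphi})$ onto the set of $\varphi'$-classes contained in $\{g\}_\varphi \cap H$, so each $\varphi$-class contributes at most $|C(\overline{\varphi})|$ many $\varphi'$-classes and summing over $\varphi$-classes hitting $H$ gives the desired bound.

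Part~(3) is the genuine obstacle. The ``only if'' direction combines (1), (2), and the fact (valid for any finitely generated residually finite group, hence for $G/H$) that an automorphism with finite Reidemeister number has finite fixed subgroup: since $R(\tau_g \circ \varphi) = R(\varphi) < \infty$ by the preceding proposition, the induced automorphism $\overline{\tau_g \circ \varphi}$ has finite fixed subgroup in $G/H$, and applying (2) to $\tau_g \circ \varphi$ yields $R(\tau_g \circ \varphi') < \infty$. For the ``if'' direction I would pick a representative $g_0$ from each of the $R(\overline{\varphi})$ classes of $\overline{\varphi}$; by the lifting argument of part~(1) every $\varphi$-class over $\{\overline{g_0}\}_{\overline{\varphi}}$ has a representative of the form $g_0 \eta$ with $\eta \in H$. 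A direct manipulation with $h' \in H$ and $h := g_0 h' g_0^{-1} \in H$ shows that $\eta_1 \sim_\sigma \eta_2$ in $H$ implies $g_0 \eta_1 \sim_\varphi g_0 \eta_2$ in $G$, where $\sigma := \tau_{\varphi(g_0)} \circ \varphi'$. Hence the number of $\varphi$-classes over $\{\overline{g_0}\}_{\overline{\varphi}}$ is at most $R(\sigma) < \infty$, and summing over the finitely many representatives gives $R(\varphi) < \infty$. The genuinely delicate ingredient is the finiteness of fixed subgroups in finitely generated residually finite groups under finite Reidemeister number, which is exactly where the residual-finiteness hypothesis on $G/H$ is required.
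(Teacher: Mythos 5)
The paper itself gives no proof of this proposition: it is imported verbatim from the literature, with part~(1) attributed to~\cite{TR}, part~(2) to~\cite{TBFTDG} and part~(3) to~\cite{TIR}. Your reconstruction is correct and is essentially the standard argument from those sources. Parts~(1) and~(2) check out: projecting the twisted-conjugacy relation and lifting gives~(1); for~(2), the observation that $g,g'\in H$ and $g'=xg\varphi(x^{-1})$ force $\overline{x}\in C(\overline{\varphi})$, together with the computation that two lifts $x$ and $xh$ ($h\in H$) of the same $\overline{x}$ yield $\varphi'$-conjugate elements via $k=xhx^{-1}\in H$, gives the required surjection from $C(\overline{\varphi})$ onto the $\varphi'$-classes inside $\{g\}_\varphi\cap H$. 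In part~(3) your ``if'' direction is complete and, as your argument implicitly shows, does not use residual finiteness at all: the identity $g_0\eta_2=h(g_0\eta_1)\varphi(h^{-1})$ with $h=g_0h'g_0^{-1}$ and $\sigma=\tau_{\varphi(g_0)}\circ\varphi'$ is exactly right, and the hypothesis covers $\sigma$ since $\varphi(g_0)\in G$. The ``only if'' direction is correct in structure but rests on one ingredient you state without proof: that an automorphism of a finitely generated residually finite group with finite Reidemeister number has finite fixed-point subgroup. You correctly isolate this as the crux and as the sole place where residual finiteness of $G/H$ enters; this is precisely the lemma established in~\cite{TIR}, so citing it (as the paper does for the whole of part~(3)) is legitimate, but be aware that your write-up proves strictly less than a self-contained argument would --- everything else in your proposal is elementary manipulation, whereas that lemma is the genuinely nontrivial content being outsourced.
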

		
		Now we introduce the definition of restricted wreath product.
		
		\begin{definition}
			The \emph{restricted wreath product} of groups $G$ and $H$ is defined in the following way:
			\[
				G \WR H := \bigoplus_{x\in H} (G)_x \rtimes_\alpha H,\enspace\text{where } \alpha(h)(G)_x = (G)_{hx}.
			\]
			Note, that usage of the direct sum notation means that only finite sums of elements of~$G$ are allowed.
			The same notion with the direct product instead of the direct sum is known as \emph{unrestricted wreath product}.
		\end{definition}
		
		We consider the case of
		\[
			\Gamma_n = \bbz_n \WR \bbz^k = \bigoplus_{x\in \bbz^k}(\bbz_n)_x \rtimes_\alpha \bbz^k
		\]
		Here the subgroup~${\Sigma_n \mathrel{:=} \bigoplus(\bbz_n)_x}$ is completely invariant as the torsion subgroup.
		Hence, every automorphism~$\varphi$ of $\Gamma$ can be restricted on~$\Sigma$: ${\varphi'=\varphi\big\rvert_\Sigma}$,
			and the quotient automorphism~${\overline{\varphi}\colon\bbz^k\to\bbz^k}$ can be defined.


	\subsection{Explicit form of automorphisms of \texorpdfstring{$\bbz_n\WR\bbz^k$}{}}\label{MF:secEFA}
	
		The statements of this section should be well-known for specialists, though the author did not manage to find an appropriate reference, so a short presentation with proofs will be given.

		Let $G$ be an inner semidirect product of its characteristic subgroup $N$ and subgroup $H$, i.~e.\  ${G=NH}$ and ${G\cap H = \{ e \}}$.
		It is well-known that each element ${g\in G}$ decomposes uniquely into product ${g=nh}$ with ${n\in N}$ and ${h\in H}$.
		
		For such $G$ and its automorphism $\varphi$ one can define a quotient automorphism ${\overline{\varphi} \colon H \to H}$ in the following way.
		First, define the projection ${ p \colon G \to G/N \cong H}$.
		Now put ${\overline{\varphi}(h) \mathrel{:=} p \circ \varphi(h) }$.

		\begin{definition}
			A mapping~$\psi$ from a group $H$ to a group $N$ is called a \emph{crossed homomorphism,} if there exists an action ${ \beta \colon H \to \Aut N }$ of $H$ on $N$,
				satisfying
			\begin{equation} \label{MF:chdef}
				\forall h,h'\in H\quad \psi(hh') = \psi(h)\bigl( \beta(h)\bigl(\psi(h')\bigr) \bigr).
			\end{equation}
		\end{definition}
		
		\begin{remark}
			Note that the mapping ${\psi \colon H \to \{ e \} \subset N}$ is a crossed homomorphism, regardless of $H$ and~$N$.
			Indeed, one can put ${\beta(h)(n)=n}$ for all ${h\in H}$ and ${n\in N}$.
		\end{remark}

		\begin{proposition}
			Let $\varphi$ be an automorphism of $G$, then for $g=nh$
			\begin{equation}\label{MF:tildephi}
				\varphi(g) = \varphi'(n)\widetilde{\varphi}(h)\overline{\varphi}(h),
			\end{equation}
			where $\varphi'$ is the restriction of $\varphi$ onto $N$, $\overline{\varphi}$ is the quotient automorphism, and ${\widetilde{\varphi}\colon H \to N}$ is some crossed homomorphism,
				satisfying~\eqref{MF:chdef} with the action
			\begin{equation}
				\beta(h)(n) \mathrel{:=} \overline{\varphi}(h) n \bigl( \overline{\varphi}(h) \bigr)^{-1}.
			\end{equation}
		\end{proposition}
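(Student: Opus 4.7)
The plan is to read off $\widetilde{\varphi}$ from the unique semidirect decomposition of $\varphi(h)$, and then verify the crossed-homomorphism identity by a direct two-way computation of $\varphi(hh')$. Since $N$ is characteristic, $\varphi$ restricts to an automorphism $\varphi'$ of $N$, so for $g=nh$ we immediately get $\varphi(nh)=\varphi'(n)\varphi(h)$ and it only remains to analyse $\varphi(h)$ for $h\in H$.

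For each $h\in H$, the element $\varphi(h)\in G=NH$ has a unique decomposition $\varphi(h)=n_h\cdot h'$ with $n_h\in N$, $h'\in H$. By construction of the quotient automorphism in the preamble, $h'=p(\varphi(h))=\overline{\varphi}(h)$, and I would simply \emph{define}
\[
	\widetilde{\varphi}(h) \mathrel{:=} n_h = \varphi(h)\cdot\overline{\varphi}(h)^{-1}\in N.
\]
Formula~\eqref{MF:tildephi} then follows at once: $\varphi(nh)=\varphi'(n)\cdot\widetilde{\varphi}(h)\overline{\varphi}(h)$.

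The substance of the proof is the identity~\eqref{MF:chdef}. I would compute $\varphi(hh')$ in two ways. By the definition just given,
\[
	\varphi(hh')=\widetilde{\varphi}(hh')\cdot\overline{\varphi}(hh').
\]
On the other hand, using multiplicativity of $\varphi$ and inserting $\overline{\varphi}(h)^{-1}\overline{\varphi}(h)$ between $\overline{\varphi}(h)$ and $\widetilde{\varphi}(h')$,
\[
	\varphi(h)\varphi(h')=\widetilde{\varphi}(h)\,\bigl[\overline{\varphi}(h)\widetilde{\varphi}(h')\overline{\varphi}(h)^{-1}\bigr]\,\overline{\varphi}(h)\overline{\varphi}(h').
\]
Because $N$ is normal in $G$ and $\overline{\varphi}(h)\in H\subset G$, the bracketed factor lies in $N$ and equals $\beta(h)\bigl(\widetilde{\varphi}(h')\bigr)$ by the definition of $\beta$; meanwhile $\overline{\varphi}$ is already known to be a homomorphism, so the last two factors coincide with $\overline{\varphi}(hh')$. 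Uniqueness of the semidirect decomposition now forces
\[
	\widetilde{\varphi}(hh')=\widetilde{\varphi}(h)\,\beta(h)\bigl(\widetilde{\varphi}(h')\bigr),
\]
which is precisely~\eqref{MF:chdef}. Along the way one should note that $\beta(h)$ really lands in $\Aut N$: conjugation by any element of $G$ preserves the normal subgroup $N$, is a group automorphism of $G$, and hence restricts to an automorphism of $N$.

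There is no serious obstacle; the only point that requires a word of care is the bookkeeping of where each factor lives and the invocation of normality of $N$ to justify that the conjugation $\beta(h)$ is an inner-type automorphism of $N$. Everything else is a formal manipulation in the semidirect product.
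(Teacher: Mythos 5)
Your proposal is correct and follows essentially the same route as the paper: define $\widetilde{\varphi}(h)$ as the $N$-component of $\varphi(h)$ via the unique decomposition, identify the $H$-component with $\overline{\varphi}(h)$ through the projection $p$, and derive the crossed-homomorphism identity by computing $\varphi(hh')$ in two ways and cancelling. Your added remark that $\beta(h)$ lands in $\Aut N$ by normality of $N$ is a small point the paper leaves implicit, but the argument is otherwise the same.
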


		\begin{proof}[Proof]
			It is clear that ${\varphi(n)=\varphi'(n)}$.
			Let ${\varphi(h) = \widetilde{n}\widetilde{h}}$ for some ${\widetilde{n} \in N }$ and ${ \widetilde{h} \in H }$.
			Note that, on one hand, ${ p\circ \varphi(h) = p(\widetilde{n}\widetilde{h}) = \widetilde{h}}$.
			On the other hand, ${ p\circ \varphi(h) = \overline{\varphi}(h) \mathrel{=:} \overline{h}}$, so, ${ \widetilde{h} = \overline{h} }$.
			
			Define ${\widetilde{\varphi}(h) \mathrel{:=} \widetilde{n}}$.
			For ${ h,h'\in H }$,
			\[
				\varphi(hh') = \widetilde\varphi(hh')\overline{\varphi}(hh') =  \widetilde\varphi(hh')\overline{\varphi}(h)\overline{\varphi}(h').
			\]
			Also,
			\[
				\varphi(hh') = \varphi(h) \varphi(h') = \widetilde\varphi(h)\overline{\varphi}(h) \widetilde\varphi(h')\overline{\varphi}(h').
			\]
			Thus,
			\[
				\widetilde\varphi(hh')\overline{\varphi}(h) = \widetilde\varphi(h)\overline{\varphi}(h) \widetilde\varphi(h') \Longrightarrow
				\widetilde{\varphi}(hh') = \widetilde{\varphi}(h)\beta(h)\bigl( \widetilde{\varphi}(h') \bigr),
			\]
			where the action $\beta$ is defined by
			\[
				\beta(k)(m) = \alpha\bigl(\overline{\varphi}(k)\bigr)(m) = \overline{\varphi}(k)(m)\bigl(\overline{\varphi}(k)\bigr)^{-1},\quad k \in  H, m\in  N.
			\]
			
		\end{proof}

		Note that for an automorphism $\varphi$ the corresponding crossed homomorphism is unique.
		Indeed, if ${\varphi(nh) = \varphi'(n)\widetilde{\varphi}(h)\overline{\varphi}(h) = \varphi'(n)\widetilde{\phi}(h)\overline{\varphi}(h)}$ for all $n$ and $h$, then ${\widetilde{\phi} = \widetilde{\varphi}}$, since ${ \widetilde{\varphi}(h) = \widetilde{\phi}(h) = \varphi(h)\bigl( \overline{\varphi}(h) \bigr)^{-1}}$.
		Conversely, every automorphism $\varphi$ is completely described by its restriction, the quotient automorphism and the corresponding crossed homomorphism, since if ${\varphi' = \phi'}$, ${\overline\varphi = \overline\phi}$ and ${\widetilde\varphi = \widetilde\phi}$, then due to the formula~\eqref{MF:tildephi} ${\varphi = \phi}$.

		\begin{corollary}
			In our case of ${N=\Sigma}$ and ${H=\bbz^k}$, applying~\eqref{MF:tildephi}, one gets for ${\sigma\in\Sigma}$, ${z\in\bbz^k}$
			\begin{equation}\label{MF:phi}
				\varphi\big((\sigma,z)\big) = \big( \varphi'(\sigma)+\widetilde{\varphi}(z),\overline{\varphi}(z) \big).
			\end{equation}
		\end{corollary}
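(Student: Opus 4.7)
The plan is simply to translate formula \eqref{MF:tildephi} into the pair notation used for the semidirect product $\Gamma_n=\Sigma_n\rtimes\bbz^k$. First I would check that the hypotheses of the previous proposition apply in the present setting, i.e.\ that $\Sigma = \Sigma_n$ is a characteristic subgroup of $\Gamma_n$ complemented by $H = \bbz^k$. The former was already observed (the torsion subgroup is fully invariant), and the latter holds by the very definition of the wreath product, with the unique decomposition $(\sigma,z) = \sigma\cdot z$ corresponding to $n=\sigma\in \Sigma$ and $h=z\in \bbz^k$.

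With this identification in place, I would apply \eqref{MF:tildephi} to $g = \sigma\cdot z$, obtaining
\[
\varphi(\sigma\cdot z) = \varphi'(\sigma)\,\widetilde\varphi(z)\,\overline\varphi(z),
\]
where $\varphi'(\sigma),\widetilde\varphi(z)\in\Sigma$ and $\overline\varphi(z)\in\bbz^k$. Since $\Sigma$ is an abelian group written additively, the product of the first two factors is just $\varphi'(\sigma)+\widetilde\varphi(z)$, and reassembling in pair notation yields exactly \eqref{MF:phi}. There is no real obstacle: the corollary is a pure change of notation, and the only point worth stating explicitly is the identification of the abstract product $nh$ with the pair $(\sigma,z)$ and of multiplication in $\Sigma$ with addition.
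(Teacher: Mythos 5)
Your proposal is correct and matches the paper's (implicit) reasoning: the corollary is stated without a separate proof precisely because it is the direct translation of \eqref{MF:tildephi} into pair notation, using that $\Sigma$ is the characteristic torsion subgroup complementing $\bbz^k$ and that its operation is written additively. Nothing is missing.
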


		\begin{proposition}\label{MF:phiexists}
			Let~${\Phi'\in\Aut\Sigma}$,~${\overline{\Phi}\in\Aut \bbz^k}$ be given, as well as some crossed homomorphism ${\widetilde\Phi \colon \bbz^k \to \Sigma}$.
			An automorphism $\varphi$ of ${\Gamma=\Sigma\rtimes_\alpha\bbz^k}$ with the corresponding
				${\varphi'=\Phi'}$, ${\overline{\varphi}=\overline{\Phi}}$, ${\widetilde{\varphi}=\widetilde{\Phi}}$ exists, if and only if
			\begin{align}
				\forall\sigma\in \Sigma\enspace \forall z\in Z\quad
				 &\Phi'( \alpha(z)(\sigma)) = \alpha(\overline{\Phi}(z))( \Phi'(\sigma)), \label{MF:eqconn1}\\
				 \forall z_1, z_2 \in \bbz^k \quad 
				 &\widetilde{\Phi}(z_1+z_2) = 
	 				\widetilde{\Phi}(z_1) +
	 				 	\alpha\bigl(
	 				 		\overline{\Phi}(z_1)
	 				 	\bigr)\bigl(
	 				 		\widetilde{\Phi}(z_2)
	 				 	\bigr). \label{MF:eqconn2}
			\end{align}
			Moreover, if it exists, then it satisfies~\eqref{MF:phi}.
		\end{proposition}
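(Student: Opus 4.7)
The plan is to prove the two directions by directly expanding the homomorphism identity for $\varphi$ using the explicit form~\eqref{MF:phi} supplied by the corollary, and reading off conditions~\eqref{MF:eqconn1}--\eqref{MF:eqconn2} as the two independent obstructions.

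\textbf{Necessity.} Assume such a $\varphi$ exists. By the corollary it satisfies~\eqref{MF:phi}, so I would compute
$\varphi\bigl((\sigma_1,z_1)(\sigma_2,z_2)\bigr)$ in two ways. On the one hand, using the semidirect-product multiplication $(\sigma_1,z_1)(\sigma_2,z_2) = \bigl(\sigma_1 + \alpha(z_1)(\sigma_2),\, z_1+z_2\bigr)$ and then~\eqref{MF:phi}, I get the first coordinate $\varphi'(\sigma_1) + \varphi'(\alpha(z_1)(\sigma_2)) + \widetilde\varphi(z_1+z_2)$. On the other hand, multiplying $\varphi(\sigma_1,z_1)\varphi(\sigma_2,z_2)$ in the semidirect product gives the first coordinate $\varphi'(\sigma_1) + \widetilde\varphi(z_1) + \alpha(\overline\varphi(z_1))\bigl(\varphi'(\sigma_2) + \widetilde\varphi(z_2)\bigr)$. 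Equating the two and noting that the second coordinates match trivially (as $\overline\varphi$ is a homomorphism), I would then specialize: setting $\sigma_2 = 0$ isolates~\eqref{MF:eqconn2}, and setting $z_2 = 0$ (after first observing $\widetilde\varphi(0)=0$, which follows from~\eqref{MF:eqconn2} with $z_1=z_2=0$) isolates~\eqref{MF:eqconn1}.

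\textbf{Sufficiency.} Conversely, given $\Phi',\overline\Phi,\widetilde\Phi$ satisfying \eqref{MF:eqconn1}--\eqref{MF:eqconn2}, I would define $\varphi$ by formula~\eqref{MF:phi} and then reverse the computation above: compute the first coordinate of $\varphi\bigl((\sigma_1,z_1)(\sigma_2,z_2)\bigr)$, apply~\eqref{MF:eqconn1} to rewrite $\Phi'(\alpha(z_1)(\sigma_2))$ as $\alpha(\overline\Phi(z_1))(\Phi'(\sigma_2))$, and apply~\eqref{MF:eqconn2} to split $\widetilde\Phi(z_1+z_2)$; the result matches the first coordinate of the product $\varphi(\sigma_1,z_1)\varphi(\sigma_2,z_2)$, proving that $\varphi$ is a homomorphism. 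Bijectivity is automatic: since $\overline\Phi$ and $\Phi'$ are automorphisms, any $(\tau,w)$ has a unique preimage with $z = \overline\Phi^{-1}(w)$ and $\sigma = (\Phi')^{-1}\bigl(\tau - \widetilde\Phi(z)\bigr)$.

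The argument is essentially a bookkeeping calculation on the semidirect-product multiplication, so I do not expect a substantive obstacle. The only care required is in the specialization step of the necessity direction, where one must check that setting $\sigma_2=0$ and $z_2=0$ really does decouple the two conditions — which it does because the $\Phi'\alpha$-contribution vanishes in the first case and the $\widetilde\Phi$-contribution vanishes in the second.
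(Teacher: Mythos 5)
Your proposal is correct and follows essentially the same route as the paper: expand $\varphi\bigl((\sigma_1,z_1)(\sigma_2,z_2)\bigr)$ and $\varphi(\sigma_1,z_1)\varphi(\sigma_2,z_2)$ via~\eqref{MF:phi}, equate first coordinates, specialize to decouple \eqref{MF:eqconn1} and~\eqref{MF:eqconn2}, and check bijectivity by exhibiting the explicit preimage using $\overline\Phi^{-1}$ and $(\Phi')^{-1}$. The only cosmetic difference is that the paper obtains~\eqref{MF:eqconn1} by subtracting~\eqref{MF:eqconn2} from the full identity rather than by setting $z_2=0$, and it verifies injectivity and surjectivity separately rather than writing down the inverse; both variants are equally valid.
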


		\begin{proof}[Proof]
			Define ${\varphi}$ using the formula ${\varphi\bigl( \sigma,z \bigr) = \bigl( \Phi'(\sigma)+\widetilde{\Phi}(z), \overline{\Phi}(z) \bigr)}$.
			It is clear that in this case ${\varphi'=\Phi'}$, ${\overline{\varphi}=\overline{\Phi}}$,~${\widetilde{\varphi}=\widetilde{\Phi}}$.
			
			Let~${\sigma_1,\sigma_2\in\Sigma}$,~${z_1,z_2\in \bbz^k}$, then
			\begin{multline}\label{MF:eqPHI1}
				\varphi\bigl( (\sigma_1,z_1) \cdot (\sigma_2,z_2) \bigr) = 
				\varphi\bigl( (  \sigma_1 + \alpha(z_1)(\sigma_2), z_1 + z_2  ) \bigr) = \\[2mm]
			 = \Bigl(
				 	\Phi'(\sigma_1) +
				 	\Phi'\bigl(
				 		\alpha(z_1)(\sigma_2)
				 	\bigl) +
				 	\widetilde{\Phi}(z_1+z_2),
				 	\overline{\Phi}(z_1+z_2)
			 \Bigr),
			\end{multline}
			as well as
			\begin{multline}\label{MF:eqPHI2}
				\varphi\bigl(
					(\sigma_1,z_1)
				\bigr) \cdot
				\varphi\bigl(
					(\sigma_2,z_2)
				\bigr) = 
				\bigl(
					\Phi'(\sigma_1) + \widetilde{\Phi}(z_1), \overline{\Phi}(z_1)
				\bigr)\cdot
				\bigl(
					\Phi'(\sigma_2)+\widetilde{\Phi}(z_2), \overline{\Phi}(z_2)
				\bigr) = \\[2mm]
			 = \Bigl(
				 	\Phi'(\sigma_1) +
				 	\alpha\bigl(
				 		\overline{\Phi}(z_1)\bigr)\bigl(\Phi'(\sigma_2)
				 	\bigr) + 
				 	\widetilde{\Phi}(z_1) +
				 	\alpha\bigl(
				 		\overline{\Phi}(z_1)
				 	\bigr)\bigl(
				 		\widetilde{\Phi}(z_2)
				 	\bigr),
				 	\overline{\Phi}(z_1) + \overline{\Phi}(z_2)
			 \Bigr).
			\end{multline}
			Note that ${\overline{\Phi}(z_1+z_2) = \overline{\Phi}(z_1) + \overline{\Phi}(z_2)}$.
			The mapping $\varphi$ is a homomorphism, if and only if the RHS of \eqref{MF:eqPHI1} equals the RHS of \eqref{MF:eqPHI2}, or
			\begin{multline}\label{MF:eqPHI3}
				\Phi'(\sigma_1) +
				 	\Phi'\bigl(
				 		\alpha(z_1)(\sigma_2)
				 	\bigl) +
				 	\widetilde{\Phi}(z_1+z_2)
				 	= {} \\
				 	{} = 
				 \Phi'(\sigma_1) +
				 	\alpha\bigl(
				 		\overline{\Phi}(z_1)\bigr)\bigl(\Phi'(\sigma_2)
				 	\bigr) + 
				 	\widetilde{\Phi}(z_1) +
				 	\alpha\bigl(
				 		\overline{\Phi}(z_1)
				 	\bigr)\bigl(
				 		\widetilde{\Phi}(z_2)
				 	\bigr).
			\end{multline}
			Putting ${\sigma_2 = 0}$ yields
			\[
				\widetilde{\Phi}(z_1+z_2) = \widetilde{\Phi}(z_1) + \alpha\bigl(\overline{\Phi}(z_1)\bigr)\bigl(\widetilde{\Phi}(z_2)\bigr),
			\]
			which coincides with~\eqref{MF:eqconn2}.
			Combining it with~\eqref{MF:eqPHI3} yields
			\[
				\Phi'\bigl(\alpha(z_1)(\sigma_2)\bigl) = \alpha\bigl(\overline{\Phi}(z_1)\bigr)\bigl(\Phi'(\sigma_2)\bigr),
			\]
			which is, essentially,~\eqref{MF:eqconn1}.
			
			Now we shall verify that $\varphi$ is monomorphic.
			If $\varphi(\sigma,z) = \big( \Phi'(\sigma)+\widetilde{\Phi}(z),\overline{\Phi}(z) \big) = (0,0)$, then ${\overline{\Phi}(z)=0}$, so ${z=0}$, since ${\overline{\Phi}}$ is monomorphic.
			Therefore ${\widetilde{\Phi}(z) = 0}$, so ${\Phi'(\sigma) = 0}$, thus ${\sigma = 0}$, since $\Phi'$ is monomorphic.
			
			To complete the proof we show that $\varphi$ is an epimorphism.
			For an arbitrary ${\sigma\in \Sigma}$ and ${z\in \mathbb{Z}^k}$ we construct such ${\sigma_0\in \Sigma}$ and ${z_0\in \mathbb{Z}^k}$ that ${\varphi(\sigma_0,z_0) = (\sigma,z)}$.
			Since $\overline{\Phi}$ is epimorphic, there exists such $z_0$ that ${\overline\Phi(z_0)=z}$.
			Since $\Phi'$ is epimorphic, there exists such $\sigma_0$ that ${\Phi'(\sigma_0) = \sigma - \widetilde{\Phi}(z_0)}$.
			Observe that
			\[
				\varphi(\sigma_0,z_0) =
				\bigl( \Phi'(\sigma_0)+\widetilde{\Phi}(z_0), \overline{\Phi}(z_0) \bigr) =
				\bigl( \sigma - \widetilde{\Phi}(z_0) + \widetilde{\Phi}(z_0), z \bigr) = (\sigma,z),
			\]
			so the element ${ (\sigma_0, z_0) }$ is the desired one.
		\end{proof}
		
		\begin{remark}
			Note, that if $\overline{\Phi}$ and $\Phi'$ satisfy~\eqref{MF:eqconn1}, then there exists $\varphi$ with
				${\varphi'=\Phi'}$, ${\overline{\varphi}=\overline{\Phi}}$ and ${\widetilde{\varphi} \equiv 0}$.
		\end{remark}

	\section{Projection Theorem}\label{MF:secPT}

		Let $s$, $n$, and $k$ be positive integers.
		Define groups~$G$ and $\Gamma$ as ${\bbz_n\WR\bbz^k}$ and ${\bbz_{sn}\WR\bbz^k}$ respectively.
		By~$\Omega$ and~$\Sigma$ we denote their respective torsion subgroups.
		
		Now we define a projection~$\Pi$ from~$\Gamma$ onto~$G$.
		For ${x\in\bbz^k}$ denote by $\Delta_x$ and $\delta_x$ generators of groups ${ (\bbz_{sn})_x }$ and ${ (\bbz_{n})_x }$ respectively.
		First, define the projection ${\pi\colon\Sigma\to\Omega}$ in the following way:
		\begin{equation}\label{MF:PiDef}
			\pi\left( k_1\Delta_{x_1} + \dots + k_l\Delta_{x_l} \right) = (k_1 \mathbin{\mathrm{mod}} n)\delta_{x_1} + \dots + (k_l \mathbin{\mathrm{mod}} n)\delta_{x_l},
		\end{equation}
		where ${k_1,\dots,k_l\in\bbz_{sn}}$, ${x_1,\dots,x_l\in\bbz^k}$.
		The projection~$\Pi$ can now be defined as
		\[
			\Pi(\sigma,z) = (\pi(\sigma),z),
		\]
		where~${\sigma\in\Sigma}$, ${z\in\bbz^k}$. Observe that $\Pi$ is a homomorphism, since
		\begin{multline*}
			\Pi\Biggl( \Bigl( \sum_{i=1}^{l} k_i\Delta_{x_i}, z \Bigr) \cdot \Bigl( \sum_{j=1}^{r} m_j\Delta_{y_j}, w \Bigr) \Biggr) = 
			\Pi\Biggl( \Bigl( \sum_{i=1}^{l} k_i\Delta_{x_i} + \sum_{j=1}^{r} m_j\Delta_{y_j+z}, z + w \Bigr) \Biggr) = {} \\
				\Bigl( \sum_{i=1}^{l} (k_i \mathbin{\mathrm{mod}} n)\delta_{x_i} + \sum_{j=1}^{r}(m_j \mathbin{\mathrm{mod}} n)\delta_{y_j+z}, z + w \Bigr) = {}\\
				\Bigl( \sum_{i=1}^{l} (k_i \mathbin{\mathrm{mod}} n)\delta_{x_i}, z \Bigr) \cdot \Bigl( \sum_{j=1}^{r}(m_j \mathbin{\mathrm{mod}} n)\delta_{y_j}, w \Bigr) = {}\\
				\Pi\Biggl( \Bigl( \sum_{i=1}^{l} k_i\Delta_{x_i}, z \Bigr) \Biggr)\cdot \Biggl(\Bigl( \sum_{j=1}^{r} m_j\Delta_{y_j}, w \Bigr) \Biggr).
		\end{multline*}
		
		\begin{lemma}
			The subgroup $\ker \Pi$ is completely invariant in $\Gamma$.
		\end{lemma}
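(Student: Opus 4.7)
The plan is to unfold the definition of $\Pi$ to identify $\ker\Pi$ as the subgroup of elements $(\sigma,0)$ with $\sigma\in n\Sigma$ (the subgroup of $n$-multiples in $\Sigma$), and then to show that this subgroup is preserved by every endomorphism of~$\Gamma$.

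First, I would establish that $\Sigma$, viewed as $\Sigma\times\{0\}\subset \Gamma$, is the torsion subgroup of~$\Gamma$ --- this is already mentioned earlier in the paper, but for completeness one checks it directly: computing $(\sigma',z')^m$ in the semidirect product $\Gamma$ shows that its $\bbz^k$-component equals $mz'$, and since $\bbz^k$ is torsion-free, any torsion element of $\Gamma$ must have second component $0$; conversely any element of the form $(\sigma,0)$ has finite order because $\sigma\in\Sigma$ does. Consequently, every endomorphism~$\varphi$ of~$\Gamma$ sends the characteristic subgroup~$\Sigma$ into itself; denote the restriction by $\varphi'\colon \Sigma\to\Sigma$.

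Next, formula~\eqref{MF:PiDef} yields directly that $\sigma\in\ker\pi$ if and only if each coefficient $k_i$ in the expansion $\sigma=\sum k_i\Delta_{x_i}$ satisfies $k_i\equiv 0\pmod{n}$, i.~e.\ if and only if $\sigma$ is an $n$-multiple in~$\Sigma$. Hence $\ker\Pi = n\Sigma\times\{0\}$, and it suffices to verify that $\varphi'(n\Sigma)\subseteq n\Sigma$ for every endomorphism~$\varphi$ of~$\Gamma$.

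Finally, since $\Sigma$ is abelian, this inclusion is immediate: for any $\sigma=n\tau\in n\Sigma$ one has $\varphi'(n\tau)=n\varphi'(\tau)\in n\Sigma$. I do not anticipate any serious obstacle; the proof simply assembles the characteristicness of the torsion subgroup with the elementary fact that in an abelian group the subgroup of $n$-multiples is preserved by every endomorphism.
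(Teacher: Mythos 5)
Your proposal is correct and follows essentially the same route as the paper: both identify $\ker\Pi$ with the $n$-multiples in the (completely invariant) torsion subgroup $\Sigma$ and then observe that any endomorphism sends $n\widetilde\sigma$ to $n$ times something, hence back into $\ker\pi$. The only cosmetic difference is that you spell out why $\Sigma$ is the torsion subgroup and phrase the last step via the equality $\ker\Pi=n\Sigma\times\{0\}$, while the paper argues directly that $\pi\circ\chi(n\widetilde\sigma)=0$; note also that this last step needs only that endomorphisms commute with taking $n$-th powers, not that $\Sigma$ is abelian.
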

		
		\begin{proof}[Proof]
			Let ${\pi(\sigma)=0\in\Omega}$, where~${\sigma = \sum k_i\Delta_{x_i}}$.
			Considering~\eqref{MF:PiDef}, one can observe that all coefficients~$k_i$ are divisible by~$n$.
			Hence, ${\sigma=n\cdot\widetilde{\sigma}}$ for some ${\widetilde{\sigma}\in\Sigma}$.
			If $\chi$ is an endomorphism of~$\Sigma$, then 
			\[
				\pi\circ\chi(\sigma) = \pi\circ\chi(n\cdot\widetilde{\sigma}) = \pi (n \cdot \chi(\widetilde{\sigma})) = 0,
			\]
			so $\ker\pi$ is a completely invariant subgroup of~$\Sigma$, which is completely invariant in~$\Gamma$, thus 
				$\ker\Pi$ is completely invariant in $\Gamma$.
		\end{proof}
	
		Now we can state the main result of this section.
		
		\begin{theorem}\label{MF:theor1}
			Let $\varphi$ be an automorphism of~$\Gamma$.
			Then there exists such an automorphism~$\psi$ of~$G$, that the following diagrams commute.
			\begin{equation}\label{MF:commdiag}
				\begin{CD}
				\Gamma @>\varphi>> \Gamma\\
				@V\Pi VV @VV\Pi V\\
				G @>\psi>> G
			\end{CD}
			\qquad\qquad
			\begin{CD}
				\Sigma @>\varphi'>> \Sigma\\
				@V\pi VV @VV\pi V\\
				\Omega @>\psi'>> \Omega
			\end{CD}
			\qquad\qquad
			\begin{CD}
				\bbz^k @>\overline{\varphi}>> \bbz^k\\
				@V\mathrm{Id} VV @VV\mathrm{Id} V\\
				\bbz^k @>\overline{\psi}>> \bbz^k
			\end{CD}
			\end{equation}
			Moreover, ${R(\varphi)\geq R(\psi)}$.
		\end{theorem}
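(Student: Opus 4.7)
The plan is to invoke the just-proved lemma that $\ker\Pi$ is completely invariant in $\Gamma$, and then read off all three diagrams and the Reidemeister inequality essentially for free.

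First, complete invariance means that both $\varphi$ and $\varphi^{-1}$ send $\ker\Pi$ into $\ker\Pi$, so $\varphi(\ker\Pi)=\ker\Pi$. Since $\ker\Pi$ is moreover normal in $\Gamma$ (inner automorphisms are endomorphisms, hence preserve any completely invariant subgroup), and since $\Pi\colon\Gamma\to G$ is surjective with kernel exactly $\ker\Pi$, the first isomorphism theorem gives a canonical identification $\Gamma/\ker\Pi\cong G$ under which $\Pi$ becomes the quotient map. The automorphism $\varphi$ therefore descends to a well-defined bijective endomorphism $\psi$ of $G$, i.e.\ an automorphism, making the first diagram in~\eqref{MF:commdiag} commute by construction.

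To extract the other two diagrams, I would restrict this commuting square to the characteristic subgroup $\Sigma$: the torsion subgroup is $\varphi$-invariant, and $\Pi\rvert_\Sigma=\pi$ maps $\Sigma$ onto $\Omega\subset G$ with $\psi(\Omega)=\Omega$ (because $\Omega$ is the torsion subgroup of $G$ and $\psi$ is an automorphism). Setting $\psi'\mathrel{:=}\psi\rvert_\Omega$ gives the second diagram, which is just the first diagram restricted to $\Sigma$. For the third diagram, observe that by the very definition of $\Pi$ the induced map on the quotient $\Gamma/\Sigma\cong\bbz^k$ is the identity, so passing the first diagram to this further quotient forces $\overline{\psi}=\overline{\varphi}$, yielding the third commutative square with $\Id$ in the vertical arrows.

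Finally, the inequality $R(\varphi)\geq R(\psi)$ is an immediate application of part~\eqref{MF:prop231} of Proposition~\ref{MF:prop23} to the $\varphi$-invariant normal subgroup $H=\ker\Pi$: the quotient automorphism of $\Gamma/\ker\Pi\cong G$ is exactly $\psi$, so $R(\psi)\leq R(\varphi)$.

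There is no real obstacle here; the work was done in the preceding lemma. The only point that needs a momentary check is that complete invariance gives the two-sided inclusion $\varphi(\ker\Pi)=\ker\Pi$ (so that $\psi$ is an automorphism, not merely an endomorphism), but this is handled by applying complete invariance to both $\varphi$ and $\varphi^{-1}$ as above.
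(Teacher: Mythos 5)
Your proposal is correct and follows essentially the same route as the paper: the author likewise defines $\psi$ by lifting through $\Pi$, observes that this is well defined because $\ker\Pi$ is characteristic (the preceding lemma), and obtains $R(\varphi)\geq R(\psi)$ from Proposition~\ref{MF:prop23}~(\ref{MF:prop231}). Your write-up merely spells out the details the paper leaves implicit, such as the two-sided inclusion $\varphi(\ker\Pi)=\ker\Pi$ and the restriction/quotient arguments for the second and third diagrams.
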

	
		\begin{proof}[Proof]
			Let $g$ be an element of $G$.
			Define ${\psi(g)}$ in the following way:
				let ${\gamma\in\Gamma}$ satisfy ${\Pi(\gamma) = g}$;
				put ${\psi(g) = \Pi\circ\varphi(\gamma)}$.
			This way $\psi$ is well defined, since $\ker \Pi$ is characteristic.
			
			By Proposition~\ref{MF:prop23}~(\ref{MF:prop231}), ${R(\varphi)\geq R(\psi)}$.
		\end{proof}
		
		\begin{corollary}\label{MF:cor}
			The groups ${\bbz_{2n} \WR \bbz^k}$ and ${\bbz_{3n}\WR\bbz^{2k+1}}$ have the $R_\infty$-property.
		\end{corollary}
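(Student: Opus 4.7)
The plan is to deduce the corollary directly from Theorem~\ref{MF:theor1} together with the result of~\cite{TR} cited in the introduction, namely that $\bbz_p\WR\bbz^k$ has the $R_\infty$-property whenever $p=2$ (for any $k$) or $p=3$ and $k$ is odd.

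First I would handle $\bbz_{2n}\WR\bbz^k$. Here I apply Theorem~\ref{MF:theor1} with the roles $(s,n)\mapsto (n,2)$, so that $\Gamma=\bbz_{2n}\WR\bbz^k$ projects via~$\Pi$ onto $G=\bbz_{2}\WR\bbz^k$. Given any $\varphi\in\Aut\Gamma$, the theorem produces $\psi\in\Aut G$ with $R(\varphi)\ge R(\psi)$. Since $\bbz_2\WR\bbz^k$ has the $R_\infty$-property by~\cite{TR}, we have $R(\psi)=\infty$, and therefore $R(\varphi)=\infty$.

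For $\bbz_{3n}\WR\bbz^{2k+1}$ the argument is identical: apply Theorem~\ref{MF:theor1} with $(s,n)\mapsto(n,3)$ and the rank of the top group equal to $2k+1$, so that $\Gamma=\bbz_{3n}\WR\bbz^{2k+1}$ projects onto $G=\bbz_{3}\WR\bbz^{2k+1}$. The latter group has the $R_\infty$-property by~\cite{TR} (the case $p=3$ with odd exponent), and hence every $\psi\in\Aut G$ has $R(\psi)=\infty$. The inequality $R(\varphi)\ge R(\psi)$ then forces $R(\varphi)=\infty$.

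There is essentially no obstacle here: Theorem~\ref{MF:theor1} was engineered precisely to push $R_\infty$ downward under the natural reduction modulo~$n$, and the two cases in the corollary are exactly those for which such a reduction lands in a group already known from~\cite{TR} to have $R_\infty$. The only thing to verify is that the projection $\Pi$ is defined (which requires $n\mid sn$, automatic) and that Theorem~\ref{MF:theor1} genuinely applies to \emph{every} automorphism of $\Gamma$, which is ensured by the fact that $\ker\Pi$ is completely invariant, as established in the lemma preceding the theorem.
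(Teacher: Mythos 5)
Your proof is correct and follows exactly the paper's own argument: apply Theorem~\ref{MF:theor1} to project onto $\bbz_2\WR\bbz^k$ (resp.\ $\bbz_3\WR\bbz^{2k+1}$) and invoke the $R_\infty$-property of those groups established in~\cite{TR}, which forces $R(\varphi)\geq R(\psi)=\infty$. The paper's proof is just a two-line version of the same reduction.
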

		
		\begin{proof}[Proof]
			In~\cite{TR} it was shown that the groups ${\bbz_{2} \WR \bbz^k}$ and ${\bbz_{3}\WR\bbz^{2k+1}}$ have the $R_\infty$-property.
			Now apply Theorem~\ref{MF:theor1}.
		\end{proof}
	
	\section{Groups without \texorpdfstring{$R_\infty$}{}-property}\label{MF:secRI}
		

		In~\cite{TR} the following two results were proved.
		\begin{proposition}\label{MF:trfone}
			If an automorphism $\varphi$ of the group ${\bbz_p\WR\bbz^k}$ for prime $p$ has finite Reidemeister number,
				then ${R(\overline{\varphi})<\infty}$ and ${R(\tau_\gamma\circ\varphi')=1}$ for all ${\gamma\in\Gamma}$.
		\end{proposition}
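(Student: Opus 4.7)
My plan breaks into the two conclusions separately.

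The first claim is routine: by Proposition~\ref{MF:prop23}(\ref{MF:prop231}), applied with the torsion subgroup $\Sigma$ (completely invariant in $\Gamma$), the projection $\Gamma\twoheadrightarrow\bbz^k$ yields $R(\overline{\varphi})\leq R(\varphi)<\infty$. For later use I record that finite $R(\overline{\varphi})$ for an automorphism of $\bbz^k$ is equivalent to $\det(1-\overline{\varphi})\neq 0$, so $\mathrm{Fix}(\overline{\varphi})=\{0\}$.

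For the second claim I first get the weaker bound $R(\tau_\gamma\circ\varphi')<\infty$ for every $\gamma\in\Gamma$ from Proposition~\ref{MF:prop23}(3), which applies because $\bbz^k=\Gamma/\Sigma$ is finitely generated and residually finite. Since $\Sigma$ is abelian, $\tau_\gamma|_\Sigma$ depends only on $z=\bar\gamma$ and equals $\alpha(z)$; so on $\Sigma$ the map $\tau_\gamma\circ\varphi'$ is exactly $\alpha(z)\circ\varphi'$, and the goal reduces to proving $R(\alpha(z)\circ\varphi')=1$ for all $z\in\bbz^k$.

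I would then use the semilinear description of $\varphi'$ from Section~\ref{MF:secEFA}. Identifying $\Sigma$ with the group ring $\mathbb{F}_p[\bbz^k]$ viewed as a rank-$1$ free module over itself (so $\alpha(z)$ is multiplication by $t^z$), the intertwining relation~\eqref{MF:eqconn1} forces $\varphi'$ to be $\overline{\varphi}$-semilinear, and its invertibility pins it to $\varphi'(g)=c\,t^{x_0}\cdot\overline{\varphi}_*(g)$ for some $c\in\mathbb{F}_p^{\times}$ and $x_0\in\bbz^k$ (the units of a Laurent polynomial ring over a field are exactly the monomials). Consequently $\alpha(z)\circ\varphi'$ acts on the monomial basis by $t^a\mapsto c\,t^{S(a)}$, with $S\colon a\mapsto w+\overline{\varphi}(a)$ and $w=z+x_0$, and the operator $T=1-\alpha(z)\circ\varphi'$ respects the decomposition $\Sigma=\bigoplus_{\mathcal{O}}V_{\mathcal{O}}$ over the $S$-orbits on $\bbz^k$. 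A direct per-orbit calculation then shows that the contribution of each block to $\mathrm{coker}(T)$ is a copy of $\mathbb{F}_p$ for every infinite orbit and for every finite orbit of length $n$ satisfying $c^n=1$, and zero otherwise.

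The main obstacle is the closing dichotomy: show that if any single orbit contributes then infinitely many must, so that $\mathrm{coker}(T)$, already finite by the previous step, is in fact trivial and $R(\alpha(z)\circ\varphi')=1$. For infinite orbits this is immediate---if $S$ has any infinite orbit, then $\bbz^k$ partitions into countably many such. For finite orbits the argument uses that, thanks to $\mathrm{Fix}(\overline{\varphi})=\{0\}$, the subgroup $\ker(1-\overline{\varphi}^n)\subset\bbz^k$ is either trivial or infinite: hence the set of $S$-periodic points of period dividing $n$ is either a single orbit or an infinite coset. A case analysis on the divisibility pattern of $\mathrm{ord}(c)$ against the orders of the eigenvalues of $\overline{\varphi}$---where the primality of $p$ enters, through the fact that each per-orbit contribution is one-dimensional over $\mathbb{F}_p$---then rules out any intermediate finite cokernel.
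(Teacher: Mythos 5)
The paper itself gives no proof of this proposition --- it is quoted from Troitsky's article~\cite{TR} --- so you are in effect reconstructing that argument. Your framework is the right one and, as far as it goes, correct: the first conclusion follows from Proposition~\ref{MF:prop23}(\ref{MF:prop231}); Proposition~\ref{MF:prop23}(3) gives $R(\tau_\gamma\circ\varphi')<\infty$; $\tau_\gamma|_\Sigma=\alpha(z)$; the identification $\Sigma\cong\mathbb{F}_p[\bbz^k]$ together with~\eqref{MF:eqconn1} and the triviality of the units of a Laurent polynomial ring over a field pins $\varphi'$ down to $\sigma\mapsto c\,t^{x_0}\overline{\varphi}_*(\sigma)$; and the orbit decomposition with its per-orbit cokernel ($\mathbb{F}_p$ for each infinite orbit and for each finite orbit of length divisible by $r=\mathrm{ord}(c)$, zero otherwise) is accurate.

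The gap is precisely where you locate the ``main obstacle,'' and the two sentences you offer there do not close it. First, for infinite orbits the claim ``if $S$ has any infinite orbit, then $\bbz^k$ partitions into countably many such'' is a non sequitur: what must be excluded is a \emph{positive finite} number of infinite orbits, and that is not immediate. One has to show that if $\overline{\varphi}$ has finite order $n_0$ then (since $\det(1-\overline{\varphi})\neq0$ forces $1+\overline{\varphi}+\dots+\overline{\varphi}^{n_0-1}=0$) $S^{n_0}=\mathrm{Id}$ and there are no infinite orbits at all, and that in the infinite-order case Kronecker's theorem plus a counting estimate comparing $|\mathcal{O}\cap[-R,R]^k|$ with $R^k$ yields infinitely many infinite orbits. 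Second, for finite orbits the promised ``case analysis on the divisibility pattern of $\mathrm{ord}(c)$ against the orders of the eigenvalues'' is not an argument, and eigenvalue orders are not really the point: when all orbits are finite, the set of points whose period is \emph{not} divisible by $r$ is a finite union of cosets $\mathrm{Per}_\ell=\{a:(1-\overline{\varphi}^\ell)(a)=v_\ell\}$ of the subgroups $\ker(1-\overline{\varphi}^\ell)$, and one must prove that such a union cannot equal $\bbz^k$ minus a nonempty finite set --- for instance by a density or periodicity argument in the style of B.~H.~Neumann's covering lemma. Until both points are supplied, the dichotomy ``finite cokernel implies trivial cokernel,'' which is the entire content of the proposition beyond the formal reductions, remains unproved.
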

		
		\begin{proposition}\label{MF:trres}
			If an automorphism $\varphi$ of the group ${\bbz_p\WR\bbz^k}$ for prime $p$ satisfies ${R(\varphi')=1}$, then ${R(\tau_\gamma\circ\varphi')=1}$ for all ${\gamma\in\Gamma}$.
		\end{proposition}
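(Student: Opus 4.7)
The plan is to reformulate the assertion as a surjectivity question on the group ring $R\mathrel{:=}\bbz_p[\bbz^k]$ and then exploit the semilinear structure of $\varphi'$ imposed by~\eqref{MF:eqconn1}. First I would identify $\Sigma$ with $R$ as an $R$-module of rank one, so that $\alpha(z)$ becomes multiplication by the monomial $X^z$. The commutation~\eqref{MF:eqconn1} forces $\varphi'$ to take the $\overline{\varphi}$-semilinear form $\varphi'(a)=\overline{\varphi}(a)\cdot u$ for some unit $u\in R^\times$, where $\overline{\varphi}$ is extended to a ring automorphism of $R$ by $X^w\mapsto X^{\overline{\varphi}(w)}$. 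Since $\Sigma$ is abelian, for $\gamma=(\tau,z)\in\Gamma$ the restriction $\tau_\gamma|_\Sigma$ depends only on $z$ and coincides with $\alpha(z)$. Hence the conclusion reduces to showing that the $\bbz_p$-linear operator $T_z\colon R\to R$ given by $T_z(a)\mathrel{:=} a-X^z\,\overline{\varphi}(a)\,u$ is surjective for every $z\in\bbz^k$, given the surjectivity of $T_0$.

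Next I would establish the intertwining identity
\[
T_z(X^w a')=X^w\,T_{z-(1-\overline{\varphi})(w)}(a'),
\]
which is a direct verification using $\overline{\varphi}(X^w)=X^{\overline{\varphi}(w)}$. Since multiplication by the unit $X^w$ is a $\bbz_p$-linear automorphism of $R$, this shows that $T_z$ and $T_{z-(1-\overline{\varphi})(w)}$ are conjugate as $\bbz_p$-linear maps and so have isomorphic cokernels; in particular $R(T_z)$ depends only on the class of $z$ in $\bbz^k/\mathrm{Im}(1-\overline{\varphi})$. This reduces the problem to verifying $R(T_z)=1$ on a chosen set of coset representatives, one of which can be taken to be $z=0$, for which the conclusion holds by hypothesis.

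The final step, and the main obstacle, is to derive $R(T_z)=1$ for the remaining coset representatives from the single assumption $R(T_0)=1$. Here I expect primality of $p$ to enter essentially: the group of units of $R$ is exactly $\bbz_p^\times\cdot\{X^m:m\in\bbz^k\}$, and the hypothesis that $a\mapsto a-\overline{\varphi}(a)\,u$ is surjective places strong arithmetic constraints on the pair $(u,\overline{\varphi})$. One then has to translate those constraints into the surjectivity of the twisted operators $a\mapsto a-X^z u\,\overline{\varphi}(a)$, which requires a delicate case-by-case analysis of the action of $\overline{\varphi}\in\mathrm{GL}_k(\bbz)$ on $\bbz^k/\mathrm{Im}(1-\overline{\varphi})$. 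Carrying this out in full would follow the strategy of~\cite{TR}.
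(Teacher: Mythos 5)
The paper itself offers no proof of this statement: it is quoted from~\cite{TR}, so there is no internal argument to compare yours against. Judged on its own terms, your setup is sound — the identification $\Sigma\cong\bbz_p[\bbz^k]$, the semilinear form $\varphi'(a)=\overline{\varphi}(a)\cdot u$ with $u=cX^m$ a trivial unit (forced by~\eqref{MF:eqconn1}), the computation $\tau_\gamma|_\Sigma=\alpha(z)$, and the intertwining identity $T_z(X^wa')=X^w\,T_{z-(1-\overline{\varphi})(w)}(a')$ are all correct. But the reduction this buys is empty exactly where the proposition has content: it establishes surjectivity of $T_z$ only for $z\in\mathrm{Im}(1-\overline{\varphi})$, i.e.\ for the single coset already controlled by the hypothesis $R(T_0)=1$, and your third step, where every other coset must be handled, is not an argument but a deferral to~\cite{TR}. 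That step \emph{is} the proposition, so the proposal has a genuine gap.

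Moreover, the gap cannot be closed by soft manipulation of the stated hypotheses alone. Take $\overline{\varphi}=\Id$, $\widetilde{\varphi}=0$ and $\varphi'(\sigma)=c\sigma$ with $c\neq 0,1$ in $\bbz_p$; Proposition~\ref{MF:phiexists} supplies the corresponding automorphism $\varphi$. Then $1-\varphi'$ is multiplication by the nonzero scalar $1-c$, so $R(\varphi')=1$, yet for $\gamma$ with nonzero $\bbz^k$-component $w$ the map $1-\tau_\gamma\circ\varphi'$ is multiplication by $1-cX^w$, which is not surjective (its image lies in a proper ideal, as evaluation at $X^w=c^{-1}$ shows). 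So the implication fails as literally stated, and any correct proof must use a hypothesis implicit in the context of~\cite{TR}, in effect $\det(1-\overline{\varphi})\neq 0$. Note also that $R(\varphi')=1$ already forces $\overline{\varphi}$ to have finite order $N$ (all orbits of $z\mapsto\overline{\varphi}(z)+m$ must be finite), and combined with $\det(1-\overline{\varphi})\neq 0$ this gives $1+\overline{\varphi}+\dots+\overline{\varphi}^{N-1}=0$, so that every affine map $z\mapsto\overline{\varphi}(z)+w$ is periodic; the real mechanism is then the decomposition of $T_z$ over these finite orbits and the condition $c^{N'}\neq 1$ on the occurring orbit lengths $N'$. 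Your coset reduction does not see any of this — indeed, in the example above the quotient $\bbz^k/\mathrm{Im}(1-\overline{\varphi})$ is infinite, so even the "finitely many representatives" step collapses.
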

		
		Now we generalize these results.
		
		\begin{theorem}\label{MF:theor2}
			Suppose  $n,m,k$ are positive integers and $\varphi$ is an automorphism of ${\bbz_{nm}\WR\bbz^k}$,
				such that the automorphisms $\varphi_n$ and $\varphi_m$, defined by the commutative diagrams
			\begin{equation}
				\begin{CD}
				\bbz_{nm}\WR\bbz^k @>\varphi>> \bbz_{nm}\WR\bbz^k\\
				@V\Pi_n VV @VV\Pi_n V\\
				\bbz_{n}\WR\bbz^k @>\varphi_n>> \bbz_{n}\WR\bbz^k
			\end{CD}
			\qquad\qquad
			\begin{CD}
				\bbz_{nm}\WR\bbz^k @>\varphi>> \bbz_{nm}\WR\bbz^k\\
				@V\Pi_m VV @VV\Pi_m V\\
				\bbz_{m}\WR\bbz^k @>\varphi_m>> \bbz_{m}\WR\bbz^k
			\end{CD},
			\end{equation}
			satisfy ${R(\varphi'_n)=R(\varphi'_m)=1}$.
			Then ${R(\varphi')=1}$.
		\end{theorem}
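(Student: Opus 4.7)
The plan is to exploit that the torsion subgroup $\Sigma := \bigoplus_{x \in \bbz^k}(\bbz_{nm})_x$ is abelian, so showing $R(\varphi') = 1$ amounts to proving the endomorphism $1 - \varphi' : \Sigma \to \Sigma$ is surjective. Write $\pi_n, \pi_m$ for the restrictions of $\Pi_n, \Pi_m$ to $\Sigma$; the computation of $\ker\Pi$ from Section~\ref{MF:secPT} applied verbatim gives $\ker\pi_n = n\Sigma$ and $\ker\pi_m = m\Sigma$, with $\Sigma/n\Sigma$ and $\Sigma/m\Sigma$ identified with the torsion subgroups of $\bbz_n \WR \bbz^k$ and $\bbz_m \WR \bbz^k$ respectively. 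Given an arbitrary $\tau \in \Sigma$, I would construct a preimage under $1 - \varphi'$ in two stages: first adjust $\tau$ modulo $\ker\pi_n$ using the hypothesis on $\varphi_n'$, and then correct the residual inside $n\Sigma$ using the hypothesis on $\varphi_m'$.

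The first stage is essentially routine: $R(\varphi_n') = 1$ yields surjectivity of $1 - \varphi_n'$, so one picks a preimage of $\pi_n(\tau)$ and lifts it to some $\sigma_n \in \Sigma$; commutativity of the first diagram then gives $\tau - (1-\varphi')\sigma_n \in \ker\pi_n = n\Sigma$.

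The heart of the argument is a single structural observation that I would isolate as a lemma: multiplication by $n$ factors through an isomorphism of abelian groups $\Sigma/m\Sigma \xrightarrow{\sim} n\Sigma$, $\bar{s} \mapsto n s$, which is well-defined and injective because $n s = 0$ in $\bbz_{nm}$ iff $s \in m \bbz_{nm}$. Since $\varphi'$ is additive, $\varphi'(n s) = n \varphi'(s)$, so this isomorphism intertwines $\varphi_m'$ on $\Sigma/m\Sigma$ with the restriction $\varphi'\big\rvert_{n\Sigma}$. Consequently the hypothesis $R(\varphi_m') = 1$ transfers directly to surjectivity of $1 - \varphi'$ on the $\varphi'$-invariant subgroup $n\Sigma$. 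Applying this to $\tau - (1-\varphi')\sigma_n \in n\Sigma$ produces $\sigma' \in n\Sigma$ with $(1-\varphi')\sigma' = \tau - (1-\varphi')\sigma_n$, whence $\sigma := \sigma_n + \sigma'$ is the desired preimage of $\tau$.

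The main obstacle --- though it is more a crux than a genuine obstruction --- is precisely the intertwining $\varphi'\big\rvert_{n\Sigma} \leftrightarrow \varphi_m'$; this is the only place where the two independent hypotheses actually combine, and the argument hinges on nothing more subtle than $\bbz$-linearity of $\varphi'$. No coprimality of $n$ and $m$ is needed, which is a pleasant feature of the proof.
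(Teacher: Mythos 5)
Your proof is correct and follows essentially the same route as the paper: both reduce $R(\varphi')=1$ to surjectivity of $1-\varphi'$, use $1-\varphi_n'$ to solve the problem modulo $\ker\pi_n=n\Sigma$, and then use $1-\varphi_m'$ to absorb the error term in $n\Sigma$ via the relation $nm\Sigma=0$. The only cosmetic difference is that you package the second stage as the intertwining isomorphism $\Sigma/m\Sigma\cong n\Sigma$, whereas the paper carries out the same cancellation by a direct computation $\chi\bigl(\iota_n(\eta_1)-n\,\iota_m(\eta_2)\bigr)=\Delta_z+n\theta-n(\theta+m\kappa)=\Delta_z$ using set-theoretic sections $\iota_n,\iota_m$.
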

		
		\begin{remark}
			In the case of ${n=m=p}$, where $p$ is prime, this theorem, combined with Theorem~\ref{MF:theor1}, implies that ${R(\varphi')=1}$, if and only if ${R(\varphi_{p}')=1}$, where
				${\varphi}$ is an automorphism of ${\Aut \bbz_{p^s} \WR \bbz^k}$ and ${\Pi_{p} \circ \varphi = \varphi_p \circ \Pi_p}$.
		\end{remark}
		
		\begin{proof}[Proof]
			Note that the condition ${R(\varphi')=1}$ is equivalent to the condition of ${\chi = 1-\varphi'}$ to be an epimorphism.
			Denote by $\chi_n$ and $\chi_m$ the epimorphisms ${1-\varphi'_n}$ and ${1-\varphi'_m}$ respectively.
			As before, $\Sigma_n$ and $\Sigma_m$ denote the torsion subgroups of ${\bbz_{n}\WR\bbz^k}$ and ${\bbz_{m}\WR\bbz^k}$, while $\Sigma$ is the torsion subgroup of $\Gamma$.

			Consider the non-homomorphic embedding  ${\iota_n \colon \Sigma_n \hookrightarrow \Sigma}$, induced by the embedding
			${\bbz_n \cong \{ 1,\dots,n\} \stackrel{\Id}{\hookrightarrow} \{ 1,\dots,sn \} \cong \bbz_{sn}  }$,
			 and,  in the same manner, ${\iota_m \colon \Sigma_m \hookrightarrow \Sigma}$.
			Note, that ${\pi_n \circ \iota_n = \Id }$ and ${\pi_m \circ \iota_m = \Id }$.

			To prove that $\chi$ is an epimorphism, we show that for an arbitrary ${z\in Z}$ there exists ${\sigma\in\Sigma}$, such that ${\chi(\sigma)=\Delta_z}$ ($\Delta_z$~is the generator of~$(\bbz_{nm})_z$).

			Let ${\eta_1\in\Sigma_n}$ satisfy ${\chi_n(\eta_1)=\Delta_z}$, then ${\chi\circ\iota_n(\eta_1) = \Delta_z + n\cdot\theta}$ for some ${\theta\in\Sigma}$.
			Indeed, ${\pi_n \circ \chi \circ \iota_n(\eta_1) = \chi_{n} \circ \pi_n \circ \iota_n (\eta_1) = \chi_n(\eta_1) = \Delta_z}$.
			Now consider ${\eta_2\in\Sigma_m}$, such that ${ \chi_m(\eta_2) = \pi_m(\theta) }$.
			In this case ${ \chi\circ \iota_m(\eta_2) = \theta + m\kappa }$ for some ${\kappa\in\Sigma}$.
			
			Observe that
			\begin{equation*}
				\chi\bigl(\iota_n(\eta_1) - n\cdot\iota_m(\eta_2)\bigr) = \Delta_z + n\theta - n(\theta + m\kappa) =
				\Delta_z + n\theta - n\theta + nm\kappa = \Delta_z.
			\end{equation*}
			Put ${\sigma = \iota_n(\eta_1) - n\cdot\iota_m(\eta_2) }$.
		\end{proof}

		\begin{corollary}
			If for an automorphism $\varphi$ of ${\Gamma = \bbz_{p_1^{s_1}\dots p_N^{s_n}} \WR \bbz^k }$ it holds true that ${R(\varphi')=1}$, then ${R(\tau_\gamma\circ\varphi')=1}$ for all ${\gamma\in\Gamma}$.
		\end{corollary}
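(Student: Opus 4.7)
The strategy is to descend to the prime case, invoke Proposition~\ref{MF:trres}, and then reassemble via iterated application of Theorem~\ref{MF:theor2}. Write ${n = p_1^{s_1}\cdots p_N^{s_N}}$ and fix ${\gamma\in\Gamma}$.

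For each prime divisor $p_i$ of $n$, Theorem~\ref{MF:theor1} yields a projection ${\Pi_i\colon \Gamma\to\bbz_{p_i}\WR\bbz^k}$ together with an induced automorphism $\varphi_i$ satisfying ${\Pi_i\circ\varphi = \varphi_i\circ\Pi_i}$. On the torsion subgroups, the restriction ${\pi_i\colon\Sigma\to\Sigma_{p_i}}$ is surjective and intertwines $\varphi'$ with $\varphi_i'$, so surjectivity of ${1-\varphi'}$ (equivalent to $R(\varphi')=1$) forces surjectivity of ${1-\varphi_i'}$, giving ${R(\varphi_i')=1}$. Proposition~\ref{MF:trres} then provides ${R(\tau_\delta\circ\varphi_i')=1}$ for every $\delta$ in $\bbz_{p_i}\WR\bbz^k$. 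Since $\Pi_i$ is a homomorphism, ${\Pi_i\circ\tau_\gamma=\tau_{\Pi_i(\gamma)}\circ\Pi_i}$, so the projection of $\tau_\gamma\circ\varphi$ through $\Pi_i$ is precisely ${\tau_{\Pi_i(\gamma)}\circ\varphi_i}$; taking ${\delta=\Pi_i(\gamma)}$ therefore yields ${R\bigl((\tau_\gamma\circ\varphi)_{p_i}'\bigr)=1}$ for each $i$.

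I would then lift these prime-level identities back to $\Gamma$ by two nested inductions driven by Theorem~\ref{MF:theor2}. An inner induction on $s$ promotes ${R((\tau_\gamma\circ\varphi)_{p_i}')=1}$ to ${R((\tau_\gamma\circ\varphi)_{p_i^{s_i}}')=1}$: at each step, apply Theorem~\ref{MF:theor2} to the automorphism of $\bbz_{p_i^s}\WR\bbz^k$ induced from ${\tau_\gamma\circ\varphi}$, with decomposition ${p_i^s = p_i\cdot p_i^{s-1}}$; the two projections satisfy $R'=1$ by the base case and the inductive hypothesis respectively. An outer induction on $N$ then combines the prime-power results, applying Theorem~\ref{MF:theor2} to the decomposition ${n = p_N^{s_N}\cdot(p_1^{s_1}\cdots p_{N-1}^{s_{N-1}})}$ and thereby concluding ${R(\tau_\gamma\circ\varphi')=1}$.

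The only potential obstacle is the bookkeeping around the various induced automorphisms, i.e.\ verifying that ``the projection of a projection equals the direct projection''; but this is immediate because the relevant maps are all canonical coefficient-wise reductions modulo divisors of $n$ and therefore compose as expected.
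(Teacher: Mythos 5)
Your proposal is correct and follows essentially the same route as the paper: project to each prime $p_i$, observe that $R(\varphi')=1$ forces $R(\varphi'_{p_i})=1$, invoke Proposition~\ref{MF:trres} together with the identity $\pi_{p_i}\circ\tau_\gamma\circ\varphi'=\tau_{\Pi_{p_i}(\gamma)}\circ\varphi'_{p_i}\circ\pi_{p_i}$, and reassemble via Theorem~\ref{MF:theor2}. The only difference is that you spell out the nested induction (on the exponent $s_i$ and on the number of primes $N$) that the paper leaves implicit in its one-line appeal to Theorem~\ref{MF:theor2} and the remark following it.
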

		
		\begin{proof}[Proof]
			If ${R(\varphi')=1}$, then ${R(\varphi'_{p_i})=1}$ for all $i$, since if ${1-\varphi'}$ is an epimorphism, then so is ${1-\varphi'_{p_i}}$, since
				$\pi_{p_i}$ is an epimorphism and ${(1-\varphi'_{p_i}) \circ \pi_{p_i} = \pi \circ (1-\phi')}$.
			Proposition~\ref{MF:trres} implies that ${R\bigl(\tau_{\Pi(\gamma)}\circ \varphi'_{p_i}\bigr)=1}$ for all ${\gamma\in\Gamma}$.
			If ${\pi_{p_i}\circ\varphi'=\varphi'_{p_i}\circ\Pi_{p_i}}$, then ${\pi_{p_i}\circ \tau_\gamma\circ\varphi'=\tau_{\Pi_{p_i}(\gamma)}\circ \varphi'_{p_i}\circ\Pi_{p_i}}$.
			Thus, by Theorem~\ref{MF:theor2}, ${R(\tau_\gamma\circ\varphi')}$ equals one for all ${\gamma\in\Gamma}$.
		\end{proof}
		
%

		Note, that for ${\varphi \in \Aut \bbz_{p_1^{s_1}\dots p_N^{s_n}} \WR \bbz^k }$ the number ${R(\varphi')}$ is either equal to one, or infinite.
		Indeed, if it is finite, then, by Theorem~\ref{MF:theor1}, all $R(\varphi_{p_i})$ are finite, so, by Proposition~\ref{MF:trfone} all $R(\varphi'_{p_i})$ equal one, thus, by Theorem~\ref{MF:theor2}, ${R(\varphi')=1}$.
		
		Corollary~\ref{MF:cor} shows that the groups ${\bbz_{2n} \WR \bbz^k}$ and ${\bbz_{3n}\WR\bbz^{2k+1}}$ have the $R_\infty$-property, due to the fact that, for the former, the number $R(\varphi_2)$ is always infinite and for the latter, the number $R(\varphi_3)$ is always infinite.
		It should be noted, that the fact that all of $\bbz_{p_i} \WR \bbz^k$ admit automorphisms with finite Reidemeister does not automatically imply that ${\bbz_{p_1^{s_1}\dots p_N^{s_n}} \WR \bbz^k }$ does.
		However, Propositions~\ref{MF:prop3} and~\ref{MF:prop2d} show, that this implication holds true and is somewhat converse to Corollary~\ref{MF:cor}.
		
		Due to Proposition~\ref{MF:prop23}, to prove that the group ${\bbz_{p_1^{s_1}\dots p_N^{s_n}} \WR \bbz^k }$ admits an automorphism~$\varphi$ with finite Reidemeister number it is sufficient to verify
			that there exists $\varphi$ with ${R(\varphi')=1}$ and ${R(\overline{\phi})< \infty}$, where the first condition is equivalent to ${R(\varphi_{p_i^{s_i}})=1}$ for all~$i$.

		\begin{proposition}\label{MF:prop3}
			The group ${\bbz_{p_1^{s_1}\dots p_N^{s_n}}\WR\bbz^k}$, where all ${p_i>3}$, admits an automorphism with finite Reidemeister number.
		\end{proposition}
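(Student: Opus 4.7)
The plan is to exhibit an explicit automorphism satisfying the two sufficient conditions singled out in the paragraph preceding the statement, namely $R(\overline{\varphi}) < \infty$ and $R(\varphi') = 1$. Write $n = p_1^{s_1}\cdots p_N^{s_N}$; since each $p_i > 3$, both $2$ and $3$ are units in $\bbz_n$, which is the only feature of the hypothesis that will be used.

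The candidate I would propose is prescribed, via Proposition~\ref{MF:phiexists}, by the data $\overline{\varphi} = -\Id_{\bbz^k}$, $\varphi'(\delta_x) = 2\delta_{-x}$ (extended $\bbz_n$-linearly), and $\widetilde{\varphi} \equiv 0$. Verifying the hypotheses of Proposition~\ref{MF:phiexists} is routine: condition~\eqref{MF:eqconn2} holds trivially for the zero crossed homomorphism, while~\eqref{MF:eqconn1} reduces to a comparison of $2\delta_{-z-x}$ with itself. Invertibility of $2$ modulo $n$ makes $\varphi'$ a bijection of $\Sigma$, so one obtains a genuine automorphism $\varphi$ of~$\Gamma$.

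The condition $R(\overline{\varphi}) < \infty$ is immediate because $1 - \overline{\varphi} = 2\Id$, giving $R(\overline{\varphi}) = 2^k$. The substantive step is $R(\varphi') = 1$, that is, surjectivity of $\chi := 1 - \varphi'$ on $\Sigma$. For each $z \in \bbz^k$ I would exhibit $\sigma \in \Sigma$ with $\chi(\sigma) = \delta_z$: taking $\sigma = -\delta_0$ handles $z = 0$, and for $z \neq 0$ (so that $z \neq -z$) I would search for $\sigma$ supported on the two-point set $\{z, -z\}$, reducing the equation to a $2\times 2$ linear system over $\bbz_n$ whose determinant evaluates to $-3$.

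The main (and essentially sole) obstacle is the solvability of this last system: it requires $-3$ to be a unit in $\bbz_n$, which is exactly where the hypothesis $p_i > 3$ is consumed, and which simultaneously delivers the invertibility of $2$ needed for $\varphi'$ to be an automorphism. By the reduction recorded just before the statement of the proposition, this completes the proof.
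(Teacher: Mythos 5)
Your construction is exactly the paper's: the automorphism determined via Proposition~\ref{MF:phiexists} by $\overline{\varphi}=-\Id$, $\varphi'(\Delta_x)=2\Delta_{-x}$, $\widetilde{\varphi}=0$, with $R(\overline{\varphi})=2^k$. Where you diverge is in establishing $R(\varphi')=1$. The paper does not compute anything over $\bbz_n$ directly: it observes that each induced automorphism $\varphi_{p_i}$ of $\bbz_{p_i}\WR\bbz^k$ has the same shape, cites Troitsky's result that $1-\varphi'_{p_i}$ is then an epimorphism for $p_i>3$, and glues these facts together with Theorem~\ref{MF:theor2} (applied iteratively over the prime-power factors). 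You instead verify surjectivity of $\chi=1-\varphi'$ on $\Sigma=\bigoplus(\bbz_n)_x$ by hand: $\chi(-\delta_0)=\delta_0$, and for $z\neq -z$ the ansatz $\sigma=a\delta_z+b\delta_{-z}$ gives the system $a-2b=1$, $-2a+b=0$ with determinant $-3$, solvable precisely because $\gcd(6,n)=1$. Your computation is correct (and since $\chi$ is an endomorphism of the abelian group $\Sigma$ hitting every generator, $\mathrm{Im}(1-\varphi')=\Sigma$, so $R(\varphi')=1$ by the remark in Section~2), and it is more self-contained, avoiding both the external citation and Theorem~\ref{MF:theor2} at this point --- though note that the final reduction you invoke (finiteness of $R(\varphi)$ from $R(\varphi')=1$ and $R(\overline{\varphi})<\infty$) still passes through the corollary of Theorem~\ref{MF:theor2} guaranteeing $R(\tau_\gamma\circ\varphi')=1$ for all $\gamma$, so that machinery is not entirely dispensed with. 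The trade-off is that the paper's route showcases Theorem~\ref{MF:theor2} as the engine reducing composite moduli to the known prime case, while yours shows the prime decomposition is unnecessary for this particular verification.
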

		
		\begin{proof}[Proof]
			Take the automorphism $\varphi$, defined by~\eqref{MF:phi} with
			\[
				\varphi'(\Delta_z) = 2\Delta_{-z},\quad \overline{\varphi} = -\Id,\quad \widetilde{\varphi}=0.
			\]
			Note, that Proposition~\ref{MF:phiexists} provides its existence.
			For every~$i$ the automorphism $\varphi_{p_i}$ satisfies
			\[
				\varphi_{p_i}'(\delta_z) = 2\delta_{-z},\quad \overline{\varphi}_{p_i} = -\Id,\quad \widetilde{\varphi}_{p_i}=0.
			\]
			Note that ${R(\overline{\varphi})=2^k<\infty}$ and in~\cite{TR} it was shown that ${1-\varphi_{p_i}}$ is an epimorphism, so apply Theorem~\ref{MF:theor2} to complete the proof.
		\end{proof}

		\begin{proposition}\label{MF:prop2d}
			The group ${\bbz_{p_1^{s_1}\dots p_N^{s_n}}\WR\bbz^{2k}}$, where all ${p_i>2}$, admits an automorphism with finite Reidemeister number.
		\end{proposition}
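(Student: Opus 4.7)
The plan is to mimic Proposition~\ref{MF:prop3}, replacing the involution $-\Id$ with a fixed-point-free order-$3$ automorphism of $\bbz^{2k}$; such an automorphism exists precisely because the rank is even.

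Identifying $\bbz^{2k} \cong (\bbz^{2})^{k}$, let $A_{0} = \bigl(\begin{smallmatrix} 0 & -1 \\ 1 & -1 \end{smallmatrix}\bigr) \in \Aut\bbz^{2}$, an element of order~$3$ with $\det(I - A_{0}) = 3$, and set $\overline{\varphi} := A_{0}^{\oplus k}$, $\varphi'(\Delta_{z}) := -\Delta_{\overline{\varphi}(z)}$, $\widetilde{\varphi} := 0$. The compatibility~\eqref{MF:eqconn1} reduces to the linearity identity $-\Delta_{\overline{\varphi}(x+z)} = -\Delta_{\overline{\varphi}(x) + \overline{\varphi}(z)}$, so Proposition~\ref{MF:phiexists} produces the required automorphism~$\varphi$. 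Moreover $R(\overline{\varphi}) = |\det(I - \overline{\varphi})| = 3^{k} < \infty$.

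By the discussion preceding Proposition~\ref{MF:prop3}, it remains to verify $R(\varphi'_{p_{i}^{s_{i}}}) = 1$ for every~$i$, i.e.\ the surjectivity of $\chi_{i} := 1 - \varphi'_{p_{i}^{s_{i}}}$ on $\Sigma_{p_{i}^{s_{i}}}$. The injectivity of $I - \overline{\varphi}$ forces every $\overline{\varphi}$-orbit in $\bbz^{2k}$ to be either $\{0\}$ or a length-$3$ cycle. This yields a $\chi_{i}$-invariant direct-sum decomposition of $\Sigma_{p_{i}^{s_{i}}}$: on the summand spanned by $\Delta_{0}$ the operator acts as $2\cdot\Id$, and on a summand spanned by $\Delta_{z}, \Delta_{\overline{\varphi}(z)}, \Delta_{\overline{\varphi}^{2}(z)}$ its matrix is $\bigl(\begin{smallmatrix} 1 & 0 & 1 \\ 1 & 1 & 0 \\ 0 & 1 & 1 \end{smallmatrix}\bigr)$, of determinant~$2$. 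Since $2$ is a unit modulo $p_{i}^{s_{i}}$ when $p_{i} > 2$, each summand is mapped bijectively and $\chi_{i}$ is surjective.

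The main obstacle is the prime $p = 3$: the involution of Proposition~\ref{MF:prop3} produces length-$2$ orbits, on which $1 - c^{2}$ cannot be a unit modulo~$3$ for any unit~$c$. Odd-length orbits evade this because $1 - (-1)^{d} = 2$ for odd~$d$; but a finite-order automorphism of $\bbz^{m}$ with no nonzero fixed points and all nonzero orbits of odd length can only exist when the rank~$m$ is even (its nontrivial complex eigenvalues, being non-real, must pair up in Galois-conjugate pairs), which is exactly the content of the hypothesis.
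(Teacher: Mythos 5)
Your proof is correct and follows the same skeleton as the paper's: an order-three block-diagonal automorphism $\overline{\varphi}$ of $\bbz^{2k}$ built from a $2\times2$ integer matrix with characteristic polynomial $\lambda^2+\lambda+1$, combined with $\varphi'(\Delta_z)=c\,\Delta_{\overline{\varphi}(z)}$, $\widetilde{\varphi}=0$, existence via Proposition~\ref{MF:phiexists}, and $R(\overline{\varphi})=3^k$. The genuine difference is the choice of the scalar $c$. The paper takes $c=2$, so that $1-\varphi'$ has determinant $1-2^3=-7$ on each length-three orbit; this forces a separate construction for the prime $7$ (with $c=3$, determinant $-26$) and an application of the Chinese remainder theorem to merge the two choices into a single multiplier $m$. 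Your choice $c=-1$ gives determinant $1-(-1)^3=2$ on every length-three orbit and the scalar $2$ on the fixed coordinate $\Delta_0$, which is a unit modulo $p_i^{s_i}$ for every odd $p_i$ simultaneously, so the whole case analysis and the CRT step disappear. Your verification of surjectivity — decomposing $\Sigma_{p_i^{s_i}}$ into $\chi_i$-invariant blocks indexed by $\overline{\varphi}$-orbits and checking that each block matrix has unit determinant — is a slightly more structural route than the paper's explicit preimage formula, but equivalent in content. The closing paragraph about why even rank is forced is a reasonable heuristic motivating the hypothesis, though it is not needed for the argument.
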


		\begin{proof}[Proof]
			Take the automorphism $\varphi$, defined by~\eqref{MF:phi} with
			\[
				\varphi'(\Delta_z) = m\Delta_{\overline{\varphi}(z)}, \quad \widetilde{\varphi}=0,\quad \overline{\varphi},
			\]
			where $\overline{\varphi}$ has the matrix $\mathbf{M}$ in standard basis in $\bbz^{2k}$, where
			\[
				\mathbf{M} = \underbrace{M \oplus \dots \oplus M}_k,\quad
								M = \begin{pmatrix}
									0  & 1  \\
									-1 & -1
								\end{pmatrix}.
			\]
			The value of $m$ will be chosen later.
			Note, that Proposition~\ref{MF:phiexists} provides the existence of $\varphi$ regardless of~$m$.
			Observe that $\mathbf{M}^3=E$.
			For~${p_i\neq7}$ consider
			\[
				\varphi'_{{p_i}^{s_i}}(\delta_z) = 2\delta_{\mathbf{M}z}.
			\]
			It is easy to verify that for $z\in\bbz^k$
			\[
				\left(1 - \varphi'_{{p_i}^{s_i}}\right) \left( -\frac{ \delta_{z} + 2\delta_{\mathbf{M}z} + 4\delta_{\mathbf{M^2}z}}{7} \right) = 
				\delta_z,
			\]
			so, ${1 - \varphi'_{{p_i}^{s_i}}}$ is an epimorphism.
			If ${p_i\neq7}$ for all~$i$, put $m=2$.
			
			If ${p_{i^*} = 7}$ put
			\[
				\varphi_{7\strut^{s_{i^*}}}'(\delta_z) = 3\delta_{\mathbf{M}z},
			\]
			then
			\[
				\left(1 - \varphi'_{{7}\strut^{s_{i^*}}}\right) \left( -\frac{ \delta_{z} + 2\delta_{\mathbf{M}z} + 4\delta_{\mathbf{M^2}z}}{26} \right) = 
				\delta_z.
			\]
			Now $m$ can be chosen as a solution of the system
			\[
				\begin{cases}
					m \equiv 3 \pmod{7\strut^{s_{i^*}}},\\
					m \equiv 2 \pmod{{p_i}^{s_i}}, \text{ for } i \neq i^*.
				\end{cases}
			\]
			The solution exists due to the Chinese remainder theorem.
			
			Observe that for such $m$ holds ${\varphi'_{p_i^{s_i}}(\delta_z) = 2\delta_{\overline{\varphi}(z)}}$
			for ${i \neq i^*}$ and
			${\varphi'_{7\strut^{s_{i^*}}}(\delta_z) = 3\delta_{\overline{\varphi}(z)}}$,
			since ${\varphi'(\Delta_z) = m\Delta_{\overline{\varphi}(z)}}$
			and ${\Pi\circ\varphi' = \varphi'_{{p_i}^{s_i}} \circ \Pi_{p_i^{s_i}}}$.
		\end{proof}

	\section{TBFT\texorpdfstring{\textsubscript{$f$}}{} in cases of finite \texorpdfstring{$R(\varphi)$}{}}\label{MF:secTBFT}

		For a group~$G$ one can define the \emph{unitary dual} $\widehat{G}$, i.~e. the set of equivalence classes of unitary irreducible representations of~$G$.
		We, however, are going to consider only ${\widehat{G}_f\subset\widehat{G}}$, which is the subset of all finite-dimensionals representations.
		Each automorphism $\varphi$ of $G$ induces a map ${\widehat{\varphi} \colon \widehat{G}_f \to \widehat{G}_f}$, ${\widehat{\varphi}(\rho) = \rho \circ \varphi}$.
		
		\begin{proposition}\label{MF:tbftfor}
			Let $\varphi$ be an automorphism of ${ \Gamma = \bbz_n\WR\bbz^k }$ with ${R(\overline{\varphi})<\infty}$ and ${R(\tau_\gamma\circ\varphi')=1}$ for all ${\gamma\in\Gamma}$.
			Then the TBFT\textsubscript{$f$} holds for $\varphi$, namely ${R(\varphi)=\#\mathrm{Fix}(\widehat{\varphi})}$.
		\end{proposition}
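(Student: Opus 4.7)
The plan is to prove the chain $R(\varphi)=R(\overline{\varphi})=\#\mathrm{Fix}(\widehat{\varphi})$, with both equalities linked through the hypothesis $R(\tau_\gamma\circ\varphi')=1$ reformulated as surjectivity of the operators $1-\tau_\gamma\circ\varphi'$ on $\Sigma$.

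For the first equality, I start with the well-defined surjection $p_*$ from $\varphi$-Reidemeister classes onto $\overline{\varphi}$-Reidemeister classes provided by Proposition~\ref{MF:prop23}(\ref{MF:prop231}), and verify injectivity directly. Given $g_1,g_2\in\Gamma$ with $p(g_1)$ and $p(g_2)$ in the same $\overline{\varphi}$-class, preconjugating $g_2$ inside its $\varphi$-class by a lift to $\Gamma$ of an element of $\bbz^k$ witnessing this equivalence reduces matters to $p(g_1)=p(g_2)$, so $\sigma := g_1 g_2^{-1}\in\Sigma$. Seeking $\eta\in\Sigma$ with $\eta\, g_2\, \varphi(\eta)^{-1}=g_1$ is then equivalent to solvability of $(1-\tau_{g_2}\circ\varphi')(\eta)=\sigma$ in $\Sigma$, which is exactly guaranteed by $R(\tau_{g_2}\circ\varphi')=1$. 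Hence $R(\varphi)=R(\overline{\varphi})$, and this is finite by the second hypothesis.

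For the second equality, I apply Mackey's little-group method to $\Sigma\rtimes\bbz^k$ with discrete abelian normal subgroup $\Sigma$: every finite-dimensional irreducible unitary $\rho$ of $\Gamma$ has $\rho|_\Sigma$ supported on a single finite $\bbz^k$-orbit $O\subseteq\widehat{\Sigma}$. Since $(\rho\circ\varphi)|_\Sigma=\rho|_\Sigma\circ\varphi'$, the condition $\rho\circ\varphi\cong\rho$ forces $O$ to be closed under $\chi\mapsto\chi\circ\varphi'$; picking $\chi\in O$ gives $z\in\bbz^k$ with $\chi\circ\varphi'=z\cdot\chi$, which after a short manipulation amounts to saying that $\chi$ annihilates $\mathrm{Im}(1-\alpha(z)\circ\varphi')=\mathrm{Im}(1-\tau_{(0,z)}\circ\varphi')$. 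By hypothesis this image is all of $\Sigma$, so $\chi$ is the trivial character, $O$ is the trivial orbit, and $\rho$ factors through $p\colon\Gamma\to\bbz^k$.

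Representations factoring through $p$ are precisely the characters $\bar\rho\in\widehat{\bbz^k}$, and $\widehat{\varphi}$ acts on them as $\widehat{\overline{\varphi}}$. Pontryagin duality identifies the $\widehat{\overline{\varphi}}$-fixed characters with the dual of $\bbz^k/\mathrm{Im}(1-\overline{\varphi})$, a finite abelian group of order $R(\overline{\varphi})$ by $R(\overline{\varphi})<\infty$; its Pontryagin dual therefore has the same cardinality $R(\overline{\varphi})$. Assembling the three steps gives $\#\mathrm{Fix}(\widehat{\varphi})=R(\overline{\varphi})=R(\varphi)$. I expect the main technical point to be the appeal to Mackey's classification: one must verify that every finite-dimensional irreducible of $\Sigma\rtimes\bbz^k$ is induced from a little group $\Sigma\rtimes H_\chi$ with $H_\chi$ of finite index in $\bbz^k$, and that the action of $\widehat{\varphi}$ on such inductions is indeed controlled by the dual map $\chi\mapsto\chi\circ\varphi'$ on $\widehat{\Sigma}$ (the twist $\widetilde{\varphi}$ does not affect the orbit parameter). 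Once this is in place, the surjectivity hypothesis straightforwardly excludes all non-trivial orbits and the remaining count collapses to the abelian case.
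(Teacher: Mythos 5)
Your proof is correct, but the second half takes a genuinely different route from the paper. The first equality ${R(\varphi)=R(\overline{\varphi})}$ is argued exactly as in the paper (injectivity of the class map induced by $p$ via surjectivity of ${1-\tau_{g_2}\circ\varphi'}$ on $\Sigma$), only in more detail. For the count of fixed representations the paper does no representation theory of $\Gamma$ at all: it quotes the general inequality ${R(\varphi)\geq\#\mathrm{Fix}(\widehat{\varphi})}$ from Fel'shtyn--Troitsky, applies TBFT\textsubscript{$f$} for the abelian quotient to get ${R(\overline{\varphi})=\#\mathrm{Fix}(\widehat{\overline{\varphi}})}$, observes the easy inequality ${\#\mathrm{Fix}(\widehat{\overline{\varphi}})\leq\#\mathrm{Fix}(\widehat{\varphi})}$ coming from lifting along $p$, and closes the sandwich. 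You instead classify the fixed points of $\widehat{\varphi}$ directly: by Clifford/Mackey theory the restriction of a finite-dimensional irreducible to $\Sigma$ is supported on a single finite $\bbz^k$-orbit of characters, and the hypothesis that every ${1-\tau_{(0,z)}\circ\varphi'}$ is onto forces any $\widehat\varphi$-invariant orbit to be trivial, so every fixed representation factors through $\bbz^k$ and the count reduces to Pontryagin duality. Your argument is self-contained (it does not need the nontrivial inequality from the literature, and it reproves the abelian case it uses), at the cost of having to justify the Clifford-theoretic single-orbit fact --- which does hold here: a finite-dimensional unitary representation of the discrete abelian group $\Sigma$ diagonalizes into characters, and irreducibility of $\rho$ forces $\Gamma$ to permute the isotypic components transitively. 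One small point to watch is the sign/direction convention in ${\chi\circ\varphi'=z\cdot\chi}$ versus ${\tau_{(0,z)}|_\Sigma=\alpha(z)}$, but since the hypothesis is quantified over all ${\gamma\in\Gamma}$ this is harmless. Both proofs are valid; the paper's is shorter because it outsources the hard inequality, yours yields the extra structural information that every $\widehat\varphi$-fixed finite-dimensional irreducible is one-dimensional and pulled back from $\bbz^k$.
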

		
		\begin{proof}[Proof]
			First, note that ${R(\varphi)\geq\#\mathrm{Fix}(\widehat{\varphi})}$.
			A proof can be found in~\cite{TBFTDG}.
			
			To prove the inverse inequality, consider the projection ${P\colon \bbz_n\WR\bbz^k\to\bbz^k}$.
			Since ${R(\varphi')=1}$, only one $\varphi$-class is mapped onto the $\overline{\varphi}$-class of the unit element.
			And, since ${R(\tau_\gamma\circ\varphi')=1}$ for all ${\gamma\in\Gamma}$, for each $\overline{\varphi}$-class in $\bbz^k$
				only one $\varphi$-class is mapped onto it, so $P$ induces a bijection of Reidemeister classes of $\varphi$
					and Reidemeister classes of $\overline{\varphi}$, i.~e. ${R(\varphi)=R(\overline{\varphi})}$.
			
			Since TBFT\textsubscript{$f$} holds for finitely-generated abelian groups (see~\cite{DZF}), $R(\varphi)=R(\overline{\varphi})=\#\mathrm{Fix}(\widehat{\overline{\varphi}})$.
			Note that ${\#\mathrm{Fix}(\widehat{\overline{\varphi}})\leq\#\mathrm{Fix}(\widehat{\varphi})}$, for if $\rho$ is a $\widehat{\overline{\varphi}}$-fixed representation, then
				 ${\rho\circ p}$ is a $\widehat{\varphi}$-fixed representation, and if $\rho_1$ and $\rho_2$ are not equivalent, then
				 ${\rho_1\circ p}$ and ${\rho_2\circ p}$ are not equivalent as well, which follows from he surjectivity of~$P$.
		\end{proof}

		\begin{theorem}\label{MF:theortbft}
			For every positive integer $k$ the finite-dimensional twisted Burnside--Frobenius theorem holds for the groups ${\bbz_{3n} \WR \bbz^{2k}}$, ${\gcd(n,2)=1}$ and ${\bbz_{n}\WR\bbz^k}$, ${\gcd(n,6)=1}$, i.~e. in all cases, where $R(\varphi)$ can be finite.
		\end{theorem}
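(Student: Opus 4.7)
The plan is to deduce the theorem as a direct application of Proposition \ref{MF:tbftfor}, whose two hypotheses — $R(\overline{\varphi})<\infty$ and $R(\tau_\gamma\circ\varphi')=1$ for every $\gamma\in\Gamma$ — have essentially been assembled piece by piece in the preceding sections. So I fix an automorphism $\varphi$ of one of the listed groups $\Gamma$ with $R(\varphi)<\infty$, and the task reduces to verifying both hypotheses.

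First I would apply Proposition \ref{MF:prop23}(\ref{MF:prop231}) — more precisely, item~(3) of that proposition — with the characteristic torsion subgroup $H=\Sigma$, noting that the quotient $\Gamma/\Sigma\cong\bbz^k$ is finitely generated and residually finite. Finiteness of $R(\varphi)$ then yields both $R(\overline{\varphi})<\infty$, which is the first hypothesis of Proposition \ref{MF:tbftfor}, and $R(\tau_\gamma\circ\varphi')<\infty$ for every $\gamma\in\Gamma$; taking $\gamma=e$ in particular gives $R(\varphi')<\infty$.

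Next I would invoke the dichotomy established just before Proposition \ref{MF:prop3}: for any automorphism of $\bbz_{p_1^{s_1}\cdots p_N^{s_N}}\WR\bbz^k$, the quantity $R(\varphi')$ takes only the values $1$ or $\infty$. Combined with the previous step this forces $R(\varphi')=1$. The Corollary following Theorem \ref{MF:theor2} then upgrades this to $R(\tau_\gamma\circ\varphi')=1$ for every $\gamma\in\Gamma$, supplying the second hypothesis of Proposition \ref{MF:tbftfor}. Applying that proposition gives $R(\varphi)=\#\mathrm{Fix}(\widehat{\varphi})$, which is TBFT\textsubscript{$f$} for $\varphi$.

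I do not anticipate any genuine obstacle: the nontrivial content — the dichotomy for $R(\varphi')$, the propagation of the condition $R=1$ through inner twists, and the reduction of TBFT\textsubscript{$f$} to its abelian quotient — is already in place. The only point worth mentioning explicitly is that the two families listed in the theorem form precisely the complement, within $\{\bbz_n\WR\bbz^k\}$, of the $R_\infty$ cases identified in Corollary \ref{MF:cor}, so Propositions \ref{MF:prop3} and \ref{MF:prop2d} guarantee that the assumption $R(\varphi)<\infty$ is achievable and the statement is not vacuous.
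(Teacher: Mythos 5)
Your proposal is correct and is essentially the paper's own argument: the paper's proof is just the one-line citation of Propositions~\ref{MF:prop3}, \ref{MF:prop2d} and~\ref{MF:tbftfor}, leaving implicit exactly the chain you spell out (Proposition~\ref{MF:prop23}(3) to get $R(\overline{\varphi})<\infty$ and $R(\varphi')<\infty$, the $1$-or-$\infty$ dichotomy for $R(\varphi')$ stated before Proposition~\ref{MF:prop3}, the corollary of Theorem~\ref{MF:theor2} to pass to all inner twists, and then Proposition~\ref{MF:tbftfor}). Your write-up simply makes that reduction explicit, so there is nothing to correct.
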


		\begin{proof}[Proof]
			Apply Propositions \ref{MF:prop3},~\ref{MF:prop2d}, and~\ref{MF:tbftfor}.
		\end{proof}

	\vspace*{2\baselineskip}

	{
	\scriptsize\scshape\par
	
	\noindent
	Mikhail Igorevich Fraiman\\
	Faculty of Mechanics and Mathematics,\\
	Lomonosov Moscow State University,\\
	1, Leninskiye Gory st.,\\
	119991, Moscow, Russia,\\[2mm]
	Moscow Center for Fundamental and Applied Mathematics,\\
	MSU Department,\\
	1, Leninskiye Gory st.,\\
	119991, Moscow, Russia,\par}
	
	\noindent
	\texttt{mfraiman@abc.math.msu.su}
	
\end{document}